\title{Lead-Time Quotations in Unobservable Make-To-Order Systems with Strategic Customers: Risk Aversion, Load Control and Profit Maximization}
\author[1]{Myron Benioudakis}
\affil[1]{{\footnotesize  Department of Management Science and Technology\\
  Athens University of Economics and Business\\ 
  \url{benioudakis@aueb.gr}, \ \url{ioannou@aueb.gr}}}
\author[2]{Apostolos Burnetas}
\affil[2]{{\footnotesize  Department of Mathematics\\ National and Kapodistrian University of Athens\newline \url{aburnetas@math.uoa.gr}}}  
\author[1]{George Ioannou}
\date{\today}
\providecommand{\keywords}[1]{\textbf{\textit{Keywords:}} #1}
\newtheorem{assumption}{Assumption}
\newtheorem{mydef}{Definition} 
\newtheorem{proposition}{Proposition}
\newtheorem{theorem}{Theorem}
\begin{document}
\maketitle
\begin{abstract}
We develop a model for pricing, lead-time quotation and delay compensation in a Markovian make-to-order production or service system with strategic customers who exhibit risk aversion.
Based on a concave utility function of their net benefit, customers make individual decisions to join the system or balk without observing the state of the queue.  The decisions of arriving customers result in a symmetric join/balk game. Regarding the firm's strategy, the provider announces a fixed  entrance fee, a lead-time quotation and a compensation rate for the part of a customer delay which exceeds the quoted lead-time. 

We analyze the effect of customer risk aversion and the compensation policy on the equilibrium join/balk strategies and the resulting input rates, and assess the flexibility of the provider in inducing a range of possible input rates under various constraints on the pricing/compensation policy. In numerical experiments we explore the behavior of pricing curves that reflect the provider's choices in inducing specific input rates. A key insight obtained from the analysis is that a main benefit of the lead-time and compensation option is to allow the entrance fee to remain high and the provider prefers strategies that lead to this direction. 

\keywords {Queueing; Customer Equilibrium strategies; Load control; Profit Maximization; Risk Aversion.}
\end{abstract}

\section{Introduction}

In firms that produce products on a make-to-order basis or provide a service, customer delay is a crucial factor that affects both the demand and the profitability, directly or indirectly. Customers may react to delay, real or anticipated, in a variety of ways ranging from mild to very extreme, depending on the effect it has on their own operation. Therefore, production or service providers must be able to effectively address customers' concerns about delays in order for the demand not to be adversely affected. 

In considering the effects of delay on the customers and the firm, several interacting factors must be taken into account. First, in many cases customer behavior is strategic, in the sense that customers also consider the reaction of other customers when they make their decision to place the order for the product or service, i.e., whether to join the system or balk. This creates externalities, since the decision of a customer affects and is affected by the behavior of the other customers, and it may have unexpected consequences in the formation of demand patterns. In addition, customers are often affected disproportionately by long delays, since they may lead to defaults or extreme losses. In such cases customers exhibit a risk-averse behavior, which must also be considered as its effect on the demand may be quite adverse.

From the point of view of the firm, the producer or service provider, hereafter referred to as the provider, is mainly concerned about his/her own profitability. The presence of strategic customer behavior and in particular risk aversion generally induces constraints on the pricing strategies and the profit. Especially under risk aversion, single pricing schemes where customers pay a fixed fee to order the product or the service (entrance fee to the system) are not appropriate because they may lead to significant loss of customers. To alleviate these effects it is common to use delay-dependent pricing policies. A class of such policies, considered in this paper, is to provide customers with a lead-time quotation and compensate for delays exceeding the lead-time. The main role of a compensation strategy in this framework is that, by providing a hedge against extreme losses only to those customers who experience long delays, it allows the provider to keep the entrance fee at a relatively high level, instead of giving a steep discount to all incoming customers in order to entice them to join. 
The questions arising here have to do with the most appropriate combination of pricing and compensation. For example is it preferable to provide a modest compensation with a long lead-time and a lower entrance fee or keep the price high and compensate from the first moment? Furthermore, depending on the general market environment, the firm strategy and/or other factors, the provider may face limitations in setting the pricing/compensation policy, e.g. may be forced to keep the entrance fee at a fixed level, or use an already advertised lead-time quotation, etc. 

Finally, in terms of the firm's objectives, the standard approach is to determine the parameters of the pricing/compensation policy that maximize the provider's profit, and accept the demand resulting from this policy as optimal. However in many cases firms are interested in controlling the customer flow from a more general viewpoint, not necessarily tied to short-term profit maximization. The provider may want to determine a pricing/compensation policy that will shape a particular demand pattern over time, which may not be profit maximizing in the short-run but aim to increasing market share, product reputation etc. Such concerns and objectives are especially relevant for startup companies or new products introduced to the market. In such situations it is useful to know the policy flexibility in inducing a desired input rate to the system, especially in the presence of limitations as discussed above. 

In this paper we develop a mathematical model for pricing, lead time quotation and delay compensation in production and service systems with strategic customers who exhibit risk aversion. 
Specifically, we consider a make-to-order or a service system where customers arrive according to a Poisson process, and the production/service times are i.i.d. exponential random variables. This gives rise to an M/M/1 queuing model. Customers place a value on the service they receive and incur a cost per unit of time of delay. The risk aversion is modeled by a concave utility function of the net customer benefit. Based on this utility customers make individual decision to join the system or balk. Customers  cannot observe the actual state of the queue upon arrival. The individual decisions of arriving customers result in a symmetric join/balk game, for which a Nash equilibrium can be identified using game theory methodology. 
Regarding the firm's strategy, we assume that the provider announces a fixed  price of entrance, a lead time quotation and a compensation rate for the part of a customer delay which exceeds the quoted lead time. 

The main contribution of the paper is in analyzing the effect of customer risk aversion and the compensation policy on the equilibrium join/balk strategies and the resulting input rates, as well as exploring the flexibility available to the provider in inducing a range of possible input rates under various constraints on the pricing/compensation policy.  Specifically, we first characterize the symmetric customer equilibria as a function of the pricing policy. We also identify the ranges of achievable input rates when the provider must keep one or more policy parameters at a fixed value. We show that when the provider has full flexibility in the pricing/compensation policy, the effects of customer risk aversion are essentially canceled, in the sense that the optimal profit under risk-neutral customers can be approached arbitrarily closely. However in this case there is no strictly optimal policy. Finally, in numerical experiments we explore the behavior of pricing curves that reflect the provider's choices in inducing specific input rates. A key insight obtained from the analysis is that the main benefit of the lead-time and compensation option is to allow the entrance fee to remain high and the provider always prefers strategies that lead to that direction. Furthermore, as risk aversion increases, the range of achievable input rates may decrease substantially, depending on the policy constraints. In this case the provider has the highest flexibility when he or she is free to set the compensation rate, even when he or she is forced to keep the entrance fee and/or the lead-time at fixed values. 

This paper is structured as follows. In Section \ref{literature} there is a literature review. In Section \ref{modeldescription} we describe the model for customers and provider under risk aversion. In Section \ref{equilibrium} we perform equilibrium analysis. In Section \ref{loadcontrol} we present load control policies, i.e., we find the range of input rates that may be achieved by varying the remaining free policy parameters and we explore the profit maximization problem. In Section \ref{numeric} we present some numerical results, i.e., equilibrium curves, optimal profits and risk aversion sensitivity based on the negative exponential function. Conclusions and future research directions are presented in Section \ref{conclusions}. 

\section{Literature Review}\label{literature}
The implications of strategic customer behavior on the performance of a queueing system have been studied extensively in the recent years. Early works on the M/M/1 queue include \cite{naor1969regulation} and \cite{edelson1975congestion} for the observable and unobservable models, respectively. Many variations of the original models have been studied since, and a comprehensive review of the literature is provided in \cite{hassin2003queue} and \cite{hassin2016rational}. 
The focus of this paper is on the interaction between customer risk preferences and delay compensation policies, as well as their effects on the service provider profit and the load of the system.  

One of the first works to explore the effect of risk aversion in a queue with strategic customers is \cite{chen2004monopoly}, which analyzes customer equilibrium strategies, the profit maximization and the social welfare maximization problem in an unobservable setting. Customers have generally non-linear delay costs and use a concave utility function to value their net benefit from entering the system. It is shown that under non-linear utility the profit maximizing price is not socially optimal. When there is flexibility to adjust the service rate, it allows the firm to increase the entrance fee and at the same time capture the entire customer market. 
\cite{GUO2011284} consider an M/G/1 model where customers have a linear or quadratic delay cost function and  partial information on the service time distribution.   
\cite{doi:10.1080/01605682.2017.1390526} model customer risk attitude through a quadratic service utility function that involves the mean and the variance of the waiting time and allows for risk averse or risk seeking behavior. Both the unobservable and the observable settings are analyzed. It is shown that when customers are highly risk averse, then providing the queue length information hurts the service provider profits. 

Regarding the use of lead-time quotation and delay compensation as tools to mitigate the effects of risk aversion, the work most closely related to this paper is \cite{Afeche13}, who
consider an unobservable M/M/1 queue with several customer types differentiated with respect to service valuation, cost of waiting and utility function. General pricing policies are allowed that may depend on the actual sojourn time in the system. It is shown that when types are distinguishable, the service provider maximizes his/her profit by charging each type an entrance fee equal to the marginal valuation of that type and providing full compensation for the waiting cost. For indistinguishable types it is shown that the optimal linear pricing schemes that fully compensate for delay are incentive compatible. Furthermore, a simple refund policy is considered, under which a customer is fully compensated for the delay only when it exceeds a quoted lead-time. 

In an observable setting, \cite{Feng2017} also consider multiple distinguishable customer types. Customers are compensated for the delay in excess of the quoted lead time. The risk preferences are modeled by an indifference curve between entrance fee and lead-time. These boundary valuation curves are differentiated by customer type. Optimal dynamic pricing and lead-time quotation policies are derived using a Markov Decision Process model. 

Lead-time quotation policies have been extensively studied in non-strategic customer settings. 
Focusing on works that use lead-time quotations as part of a pricing strategy,
\cite{doi:10.1287/opre.1030.0089} proposes a diffusion approximation model for a Markovian queue 
with patient and impatient customers where the provider sets the service capacity and prices statically and then dynamically quotes lead-times to arriving customers and determines the service order. 
\cite{slotnick2005manufacturing} also consider dynamic lead-time quotation strategies in a make-to-order system with pricing linear in processing times. They 
focus on the impact of inaccurate production backlog information which can result in penalty fees and loss of reputation.
\cite{Plambeck2008} analyze a diffusion model for an assemble-to-order system and develop dynamic policies that include product prices, quoted leadtimes, production capacities for individual components and sequencing and expediting rules. 
\cite{doi:10.1287/opre.1080.0608} assume deterministic processing times and homogeneous customers with a nonlinear waiting cost function, and derive near optimal dynamic rules for lead-time quotation and customer sequencing. 
\cite{doi:10.1287/opre.1110.0969} develop a Semi-Markov Decision Process model for dynamic pricing and lead-time quotation in a G/M/1 system with heterogeneous service valuations and derive a threshold type structure of the optimal policy. 
\cite{doi:10.1111/j.1937-5956.2011.01248.x} consider a make-to-order system with customers classified as lead-time sensitive or price sensitive and compare a uniform quotation strategy of a single price and lead-time quotation with a differentiated strategy of offering a menu of prices and lead-time quotes. 

A related to lead-time quotation but less binding approach for addressing customer reaction to anticipated long waiting is that of delay announcements, where the provider informs incoming or waiting customers about expected completion times. The effect of delay announcements in the formation of customer equilibrium strategies has been studied in several works in the literature. We again restrict attention to the interaction of delay announcement with pricing strategies.  In an early work in this direction, \cite{Hassin1986} compares profit maximizing and socially optimal strategies in observable and unobservable systems.   
\cite{Afeche2016} develop incentive-compatible price and announced delay menus in a queue with heterogeneous risk-neutral customers. \cite{Burnetas2017} consider an unobservable queue where entering customers are informed at random times about the queue state and have the option of reneging. Pricing strategies that include separate fees for entering the system and actually receiving service are developed to optimize the social welfare. 

Finally, the problem of adjusting the input rate, the traffic and generally the load of a system has been studied extensively, although there are not many works that focus on the variability of the input rate with respect to policy parameters. This question has been explored in deterministic network flow problems under a Wardrop equilibrium framework, where it is desired to arrange the flow patterns so that no user has an incentive to change his/her route. 
\cite{Dafermos1984} and  \cite{Tobin1988} develop a sensitivity analysis methodology based on variational inequalities, and derive expressions for the derivatives of the equilibrium flows with respect to demand and cost parameters. \cite{Hai1995} incorporates queueing delays in the network links. The prevalent approach in input rate control of a queueing system is from the point of view of optimizing a profit or cost function and is related to admission control problems. We also focus here on admission control using pricing policies. In an early work, 
\cite{low1974} develops a dynamic programming model for selecting among a finite set of admission prices. \cite{ata2006} consider the joint admission and service control problem in an M/M/1 queue with adjustable arrival and service rates, under long-run average welfare maximization. They also formulate and solve an associated dynamic pricing problem. 
\cite{Paschalidis2000} and \cite{Cil2011} consider the dynamic pricing problem in a queue with more than one price sensitive customer classes.

\section{Model Description}\label{modeldescription}
We consider a Make-to-Order (MTO) system modeled as a single server Markovian queue. Potential customers arrive according to a Poisson process with rate $\Lambda$. All customers are identical and  place orders one at a time. Service times are exponentially distributed with rate $\mu$. The system operates under the first-come first-served (FCFS) discipline. When an order is placed, it brings a revenue of $R$ to the customer. There is also a waiting cost $c$ for the customer per unit of sojourn time in the system.

The provider announces an entrance fee $p$, quotes a lead-time $d$ for the service completion to each incoming customer and a compensation $l$ per unit time that the sojourn time exceeds $d$. This quotation is identical for all customers, regardless of the system state. Furthermore, incoming customers do not observe the system state upon arrival. 

Customers are strategic and decide whether to join the system or balk.  
Their decision is based on the expected net benefit that they obtain from joining. 
We further assume that customers are risk averse.
Specifically, the net benefit for a customer who joins the system and spends a total time $X$ before service completion, is equal to  $U(R-p-cX+l(X-d)^{+})$, and for a customer who balks equal to $U(0)$, where $U$ is a utility function that satisfies  the properties in the following assumption. 
\begin{assumption}\label{assump U}
\begin{itemize}
\item[i.] $U$ is strictly increasing and concave, i.e., $U'>0$ and $U'' \leq 0$.
\item[ii.] There exists $z'$ such that $U''(z) < 0$ for $z\leq z'$.
\end{itemize}
\end{assumption}
The monotonicity and concavity are standard assumptions for risk aversion. The last property ensures that $U$ is strictly non-linear for sufficiently large delay values. Assumption \ref{assump U} has the following two implications which will be used in the analysis. 
First, it is easy to show that 
\begin{equation}\label{uneginf}
lim_{z \to -\infty}U(z) = -\infty. 
\end{equation}
Second, from Jensen's inequality it follows that for any random variable $Z$ such that $P(Z<z')>0$,  
\begin{equation}\label{Jensen}
E(U(Z))<U(E(Z)).
\end{equation}
The strict inequality holds because $U$ is strictly concave in the interval $(-\infty, z')$ and the probability of $Z$ being in this interval is positive.

Following the standard approach in the unobservable framework, we restrict the analysis to symmetric mixed strategies which are determined by a common join probability $q$ for all customers. Given the provider's pricing/compensation policy determined by  $(d, p, l)$ a symmetric Nash equilibrium joining strategy  is defined as follows. Consider a tagged customer who follows strategy $h$ given that all other follow $q$. The expected utility of the tagged customer is equal to 
\begin{equation}\label{B}
B(h;q, d, p, l)=hE(U(R-p-cX+l(X-d)^{+})|q)+(1-h)U(0).
\end{equation}
The expectation in the above expression is taken with respect to the steady-state distribution of the system, under a symmetric join strategy $q$ . 
Specifically,  given a  mixed strategy $q$, the number of customers in the system evolves according to a simple M/M/1 queue process with input rate $\Lambda q$ and service rate $\mu$.
Assuming that $\Lambda q < \mu$, the steady-state distribution is geometric with parameter $\rho=\frac{\Lambda q}{\mu}$.

A strategy $q^{e}$ is Symmetric Nash equilibrium if it is a best response to itself, i.e., 
\begin{equation}\label{SNE}
B(q^{e};q^{e}) = \max_{h\in[0,1]} B(h;q^{e}).
\end{equation}

We next consider the provider. He or she takes into account the strategic behavior of the customers, by assuming that they follow a symmetric equilibrium joining strategy, as a response to the announced  pricing/compensation policy. 
We also assume that the provider is risk neutral. This is a reasonable assumption, since he or she provides the service to a large number of independent customers, thus his/her risk may be considered diversifiable and the variance of the profit over a long horizon is diminished. 
The profit function is
\begin{equation*}
 G(d, p, l) = \Lambda q^e(d,p,l) (p - l L(q^e(l,d,p), d))
\end{equation*}
which expresses the expected net profit per unit time, given  that all customers follow the equilibrium strategy $q^{e}(d, p, l)$
and $L(q,d) = E((X-d)^+|q)$
is the expected excess delay of a customer beyond the announced lead-time $d$ under symmetric mixed strategy $q$. 
 By conditioning on the number of customers in the system at the arrival instant of a random entering customer, it follows that 
\begin{eqnarray*}
L(q, d)&=&\displaystyle\sum_{n=0}^{\infty}(1- \rho) \rho^{n}\int_d^\infty (t-d) \frac{\mu^{n+1}} {n!}{t^{n}}{e}^{-\mu t}\,dt 
=\frac{{e}^{-(\mu-\Lambda q) d}} {\mu-\Lambda q}.
\end{eqnarray*}

We assume that the provider's policy space is determined by $p\leq R,  l\leq c$ and $d \geq 0$. These assumptions are consistent with pricing in a risk averse customer framework. If the provider considered entrance fees $p>R$, then he or she should have to also use $l>c$ in order to entice the customers to join. Under such a policy customers would pay a high price to join the system, betting on long delays and subsequent high compensations. This framework is beyond the scope of our analysis. 

Regarding the criteria for selecting a policy $(d,p,l)$, we broaden the scope of profit maximization and consider the provider's flexibility when the policy space is restricted by fixing one or more parameters. For each case we examine the range of achievable input rates, how this range is expanded compared to the policy that does not provide compensation, the set of policies that result in any given achievable input rate, and finally the profit maximization problem under the given restrictions in the policy space. 

In all the following analysis we will assume that under any customer equilibrium strategy the system is stable, i.e., $\Lambda q < \mu$. Even under policy  $p=R, l=c, d=0$, when it is easy to see that all customers are indifferent between joining and balking, it is not plausible that a $q\geq \Lambda/\mu$ will ever be realized in equilibrium, since no customer is expected to join the system under instability.

\section{Equilibrium Analysis}\label{equilibrium}
In this section we identify the equilibrium strategies under individual customer behavior. To do this we first analyze a tagged customer's best response to a strategy followed by all other customers. Assume that all customers join with probability $q$ and the tagged customer with probability $h$. 

From \eqref{B} the tagged customer's expected utility can be written as: $$B(h;q, d, p, l)=U(0)+hK(q, d, p, l),$$ where
$$K(q, d, p, l)= E(U(R-p-cX+l(X-d)^{+})|q)-U(0).$$ 

Under the assumption of symmetric customer strategies, there is a one-to-one correspondence between a strategy $q$ and the resulting input rate $\lambda=\Lambda q$. In the following 
we consider $K$ and $B$ as functions of $\lambda$ instead of $q$, and will use the notation $\lambda^{e}(d, p, l)=\Lambda q^e(d, p, l)$ as the equilibrium input rate under $(d, p, l)$. 

Before we proceed to equilibrium analysis, we explore some properties of $K(\lambda, d,p,l)$.
It is first easy to see that $K$ is strictly decreasing in $p$ and $d$ and strictly increasing in $l$. Regarding monotonicity in $\lambda$, $K$ can be written as follows:
\begin{eqnarray*}
K(\lambda, d, p, l)
&=&E(U(R-p-(c-l)X+ l((X-d)^{+}-X))|\lambda)-U(0)\\
&=&E(U(R-p-(c-l)X- l \min(X,d))|\lambda)-U(0).
\end{eqnarray*}
In steady state the sojourn time of a customer is exponentially distributed with rate $\mu-\lambda$. Therefore, $X$ is stochastically increasing in $\lambda$, and since $U$ is decreasing in $X$, it follows that $K$ is 
decreasing in $\lambda$. In particular, when $c=l$ and $d=0$, $K$ is constant and equal to $U(R-p)-U(0)$, whereas if $l<c$ or $d>0$, $K$ is strictly decreasing in $\lambda$.

We next show some useful limiting properties.
First, when $l=0$ or $d \to \infty$, there is effectively no delay compensation; therefore, the problem reduces to \cite{chen2004monopoly} under a linear waiting cost function, i.e., 
\begin{equation}\label{KCF}
  K_{CF}(\lambda,p,c) = E(U(R-p-cX)|\lambda)-U(0).
\end{equation}
Let $\lambda^{e}_{CF}(p;c)$ be the corresponding equilibrium input rate.
Similarly, when $d=0$, customers are compensated from the first minute of delay, which reduces the problem to one with no delay compensation and waiting cost equal to $c-l$; therefore, $K(\lambda,0,p,l)=K_{CF}(\lambda,p,c-l)$. 

The behavior of $K$ as the input rate approaches $\mu$ is more involved. 
When $l<c$, the quantity  
$R-p-(c-l)X- l \min(X,d)$ can take arbitrarily small values with positive probability, thus, from \eqref{Jensen}, 
$$
K(\lambda,d, p,l) < U(E(R-p-(c-l)X- l \min(X,d)|\lambda)) - U(0).
$$
However, $E(R-p-(c-l)X- l \min(X,d)|\lambda) < R-p -\frac{c-l}{\mu-\lambda}$, thus, from \eqref{uneginf}, 
\begin{equation}\label{Kneginf}
\lim_{\lambda \to \mu-} K(\lambda, d, p,l) = -\infty.
\end{equation}
This limiting behavior of $K$ is expected, since a customer who joins a very congested system is faced with unbounded delay and if he/she is partially compensated, the net benefit may take arbitrarily low values. On the other hand, for $l=c$, 
$$
K(\lambda, d, p, l) = U(E(R-p- c \min(X,d)|\lambda)) - U(0),
$$
and since $\lim_{\lambda \to \mu-} P(X>d) = 1$ for all $d>0$, it follows that 
\begin{equation}\label{Kneginfc}
\lim_{\lambda \to \mu-} K(\lambda, d, p,l) = U(R-p-cd) - U(0).
\end{equation}
In this case, although the delay of a joining customer is unbounded, he/she assumes the cost only for the first $d$ units of the delay and is fully compensated for the remaining part, therefore the expected benefit is finite and may even be positive, depending on the values of $d, p$ and $l$. 
Note that as $d\to \infty$, the compensation is effectively canceled, thus the right-hand side of \eqref{Kneginfc} also tends to $-\infty$, which is consistent with the limit of \eqref{KCF} as $\lambda \to \mu-$.

The tagged customer maximizes his/her expected utility $B(h;\lambda, d,p,l)$ in $h$ given $\lambda$. Therefore, if $K(\lambda, d,p,l)<0$ his/her unique best response to input rate $\lambda$ is equal to 0, if $K(\lambda, d,p,l)>0$ the unique best response is equal to 1, and if $K(\lambda, d,p,l)=0$ any strategy in $[0,1]$ is a best response. 
Since $K(\lambda, d, p, l)$ is decreasing in $\lambda$, the best response $h^{*}(\lambda)$ is also decreasing, which implies an Avoid the Crowd (ATC) behavior for customers (c.f. \cite{hassin2003queue}, p. 6-7). This means that as traffic increases, customers are more reluctant to join, although they are partially compensated for their delay.

In the next theorem we characterize the existence and uniqueness of symmetric equilibrium rates under various cases for the values of $p, l, d$.
\begin{theorem} \label{theopld}
The symmetric equilibrium input rates are characterized as follows. 
\begin{itemize}
\item[i.] For $p=R$ there are the following cases. 

(a) If $l=c$ and $d=0$, then any $\lambda^e \in [0,\mu) \cap [0,\Lambda]$ is a symmetric equilibrium.

(b) In all other cases, $\lambda^e=0$ is the unique symmetric equilibrium.

\item[ii.] For $p<R$ there are the following cases. 

(a) If $K(0, d,p,l) \leq 0$, then  $\lambda^e=0$ is the unique symmetric equilibrium.

(b) If $K(0, d,p,l) > 0$ and $\lim_{\lambda \to \mu-} K(\lambda, d, p,l) <0$, then the unique symmetric equilibrium is equal to $\min(\lambda^0,\Lambda)$, where $\lambda^0$ is the solution of $K(\lambda, d, p, l)= 0$. 

(c) If $\lim_{\lambda \to \mu-} K(\lambda, d, p,l) \geq 0$, for $\Lambda \geq \mu$ a symmetric equilibrium does not exist, while for $\Lambda <\mu$, $\lambda^e=\Lambda$ is the unique symmetric equilibrium.
\end{itemize}
\end{theorem}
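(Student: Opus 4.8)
The plan is to reduce everything to the best-response analysis recorded immediately before the theorem. Writing $\lambda=\Lambda q$, a strategy is a symmetric equilibrium precisely when the tagged customer's best response at the induced rate reproduces that rate: $\lambda^e=0$ works iff $K(0,d,p,l)\le0$; an interior rate $\lambda^e\in(0,\Lambda)$ works iff $K(\lambda^e,d,p,l)=0$; and $\lambda^e=\Lambda$ works iff $K(\Lambda,d,p,l)\ge0$ (with $\Lambda<\mu$ enforced by the standing stability assumption, so that $K$ and the steady state are defined). Because $K(\cdot,d,p,l)$ is decreasing on $[0,\mu)$ — constant exactly when $l=c$ and $d=0$, strictly decreasing otherwise — the set $\{\lambda\in[0,\mu):K(\lambda,d,p,l)\ge0\}$ is a (possibly empty, degenerate, or full) initial interval, and each item of the theorem amounts to locating its right endpoint relative to $0$ and $\Lambda$.

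For part (i) I would set $p=R$. In (a), $l=c$ and $d=0$ make the net benefit $R-p-cX+l(X-d)^+$ vanish identically, so $K\equiv U(0)-U(0)=0$ throughout the admissible range and every $\lambda^e\in[0,\mu)\cap[0,\Lambda]$ is a best response to itself. In (b), where $l<c$ or $d>0$, I would show $K(0,d,R,l)<0$: at $\lambda=0$ the net benefit reduces to $-(c-l)X-l\min(X,d)$, which is non-positive almost surely and strictly negative with positive probability, so strict monotonicity of $U$ gives $E(U(\cdot))<U(0)$; then strict monotonicity of $K$ in $\lambda$ forces $K<0$ on all of the admissible range, the best response is always $0$, and $\lambda^e=0$ is the unique equilibrium.

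For part (ii), where $p<R$, I would additionally use continuity of $\lambda\mapsto K(\lambda,d,p,l)$ on the set where it is finite (a routine dominated/monotone-convergence argument, since the sojourn-time rate $\mu-\lambda$ varies continuously), together with the limits \eqref{Kneginf}--\eqref{Kneginfc}. In (a), the hypothesis $K(0,\cdot)\le0$ is incompatible with $l=c,d=0$ (which would give $K\equiv U(R-p)-U(0)>0$), so $K$ is strictly decreasing; then $K\le0$ everywhere and $K<0$ for $\lambda>0$, hence $\lambda^e=0$ is the unique equilibrium. In (b), note first that $K(0,\cdot)>0>\lim_{\lambda\to\mu-}K(\lambda,\cdot)$ again rules out the constant-$K$ subcase, so $K$ is strictly decreasing and, by continuity and the intermediate value theorem, has a unique zero $\lambda^0\in(0,\mu)$; then $K>0$ on $[0,\lambda^0)$ and $K<0$ on $(\lambda^0,\mu)$, and splitting into $\lambda^0\le\Lambda$ (the strategy $q^e=\lambda^0/\Lambda\in(0,1]$ is the only self-consistent one) and $\lambda^0>\Lambda$ (now $K>0$ on all of $[0,\Lambda]$, so $q^e=1$) gives $\lambda^e=\min(\lambda^0,\Lambda)$. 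In (c), \eqref{Kneginf} shows $\lim_{\lambda\to\mu-}K\ge0$ can occur only when $l=c$, and then the hypothesis, which by \eqref{Kneginfc} equals $U(R-p-cd)-U(0)$, combined with strict monotonicity when $d>0$ and with $p<R$ when $d=0$, yields $K(\lambda,\cdot)>0$ for every $\lambda\in[0,\mu)$; the best response is then always $1$, so for $\Lambda<\mu$ the unique equilibrium is $\lambda^e=\Lambda$, while for $\Lambda\ge\mu$ the only candidate $q^e=1$ violates stability and no equilibrium exists.

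The main difficulty I expect is not any single computation but the boundary bookkeeping: keeping ``equilibrium'' tethered to the standing assumption $\Lambda q<\mu$; handling the possibility that $K$ takes the value $-\infty$ for $\lambda$ near $\mu$, so that ``$K$ continuous'' must be phrased on the finite region; securing genuine uniqueness of the root $\lambda^0$ in (b), which is exactly why the constant-$K$ subcase must be excluded there; and making sure the degenerate case $K\equiv0$ is caught by (i)(a) and not mis-sorted into (ii). Each point is elementary, but these are the places where a careless argument would leak.
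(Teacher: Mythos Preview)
Your proposal is correct and follows essentially the same route as the paper's proof: both reduce the equilibrium characterization to the best-response analysis and the monotonicity of $K(\cdot,d,p,l)$, with part (i) handled by direct evaluation of the net benefit at $p=R$ and part (ii) by locating the zero (or lack thereof) of $K$ relative to $0$ and $\Lambda$. Your write-up is considerably more detailed than the paper's---which dispatches (ii) in two sentences---particularly in explicitly invoking continuity and the intermediate value theorem, and in carefully excluding the constant-$K$ subcase from (ii)(a) and (ii)(b); these are sound refinements rather than a different argument.
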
 

\begin{proof}

i) In case (a), $K(\lambda, 0, R, c)= 0$ for all $\lambda<\mu$, therefore any $\lambda^e \in[0,\mu) \cap [0,\Lambda]$ is a symmetric equilibrium.

In case (b), if $l=c$ and $d>0$ then $K(\lambda, d, R, c)=E(U(-c \min(X,d)|\lambda)-U(0)<0$ for all $d>0$ and $\lambda<\mu$. Therefore $\lambda^e=0$ is a unique symmetric equilibrium. On the other hand, if $l<c$ and $d\geq 0$, then $K(\lambda, d, R, l)= E(U( -cX+l(X-d)^{+}|\lambda))-U(0)<0$ for all $\lambda<\mu$, because $(X-d)^{+} \leq X$ and $l<c$.  Therefore $\lambda^e=0$ is a unique symmetric equilibrium.

ii) The proof follows immediately from the form of the best response function and the monotonicity of $K(\lambda, d,p,l)$ in $\lambda$. 
In particular, in case (b) $K$ is not constant, thus it is strictly decreasing in $\lambda$ and the equation $K(\lambda, d,p,l)=0$ has a unique solution $\lambda^0$.
\end{proof}

From Theorem \ref{theopld} it follows that when the entrance fee is equal to the service reward, 
then no customer will join if the delay compensation is not in full and from the first minute, i.e., $l=c$ and $d=0$. If this is true, then all customers are indifferent and any input rate that does not lead to instability is equilibrium. 
When $p<R$, the equilibrium is unique and varies from $0$ to $\Lambda$, depending on the policy parameters, except for case ii (c), where for $\Lambda \geq \mu$ a symmetric equilibrium does not exist. In this case all customers have a positive benefit from joining as long as $\lambda < \mu$, however for $\lambda \geq \mu$ the system becomes unstable and any joining customer has zero probability of receiving the service reward in a finite time. 

The next proposition establishes monotonicity properties of $\lambda^e$ with respect to the policy parameters. 

\begin{proposition}\label{lambdae properties}
When a unique symmetric equilibrium input rate $\lambda^e(d,p,l)$ exists, it has the following properties:
\begin{align*}
\frac{\partial \lambda^e}{\partial p}\leq 0,\quad
\frac{\partial \lambda^e}{\partial d}\leq 0,\quad
\frac{\partial \lambda^e}{\partial l}\geq 0.
\end{align*}

\end{proposition}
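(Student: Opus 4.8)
The plan is to reduce to the only non-trivial case and then invoke the implicit function theorem. By Theorem~\ref{theopld}, whenever a unique symmetric equilibrium exists it is either identically $0$ (cases i(b) and ii(a)) or identically $\Lambda$ (case ii(c) with $\Lambda<\mu$); in those subcases $\lambda^e$ is locally constant in all three policy parameters, so the three (weak) inequalities hold trivially. The substantive case is ii(b), where $\lambda^e(d,p,l)=\min\bigl(\lambda^0(d,p,l),\Lambda\bigr)$ and $\lambda^0$ is the unique root of $K(\lambda,d,p,l)=0$. On the open region where $\lambda^0<\Lambda$ one has $\lambda^e=\lambda^0$, and I would work on this region first and transfer the conclusion to $\min(\lambda^0,\Lambda)$ at the end.

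First I would collect the sign information already available from Section~\ref{equilibrium}: in case ii(b) $K$ is not constant, hence strictly decreasing in $\lambda$, and moreover $K$ is decreasing in $p$ and in $d$ and increasing in $l$. Next I would argue that $K$ is continuously differentiable in $(\lambda,d,p,l)$ on the relevant region by writing $K(\lambda,d,p,l)=\int_0^\infty U\bigl(R-p-(c-l)t-l\min(t,d)\bigr)(\mu-\lambda)e^{-(\mu-\lambda)t}\,dt-U(0)$ and differentiating under the integral sign, using that the concave $U$ is differentiable with a monotone derivative $U'$. With $\partial K/\partial\lambda<0$ this makes the implicit function theorem applicable to the identity $K\bigl(\lambda^0(d,p,l),d,p,l\bigr)\equiv 0$.

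Then I would differentiate this identity with respect to $p$, $d$ and $l$ in turn, obtaining $\dfrac{\partial\lambda^0}{\partial x}=-\dfrac{\partial K/\partial x}{\partial K/\partial\lambda}$ for $x\in\{p,d,l\}$. Since the denominator $\partial K/\partial\lambda$ is negative, $\partial\lambda^0/\partial x$ has the same sign as $\partial K/\partial x$: nonpositive for $x=p$ and $x=d$, nonnegative for $x=l$. Finally, to pass from $\lambda^0$ to $\lambda^e=\min(\lambda^0,\Lambda)$, I note that on $\{\lambda^0<\Lambda\}$ we have $\lambda^e=\lambda^0$, on $\{\lambda^0>\Lambda\}$ we have $\lambda^e\equiv\Lambda$, and at the kink the one-sided derivatives are consistent with the stated inequalities, so $\partial\lambda^e/\partial p\le 0$, $\partial\lambda^e/\partial d\le 0$, $\partial\lambda^e/\partial l\ge 0$ wherever $\lambda^e$ is differentiable.

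I expect the delicate point to be the smoothness step: concavity alone guarantees $U$ differentiable only almost everywhere, so justifying differentiation under the integral sign may require a mild integrability condition (e.g.\ $E\bigl[U'(R-p-(c-l)X-l\min(X,d))\bigr]<\infty$). A cleaner alternative I would include is to bypass differentiability and argue by monotone comparative statics: since $K$ is strictly decreasing in $\lambda$ and monotone in each policy parameter with the stated directions, increasing $p$ or $d$ pushes $K(\cdot,d,p,l)$ down pointwise and hence moves its zero crossing $\lambda^0$ to the left, while increasing $l$ moves it to the right; this yields the monotonicity of $\lambda^e$ directly, from which the derivative inequalities follow at every point of differentiability.
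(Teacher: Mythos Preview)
Your proposal is correct and follows essentially the same route as the paper: split off the boundary cases $\lambda^e\in\{0,\Lambda\}$ where the derivative vanishes, and in the interior case implicitly differentiate $K(\lambda^e,d,p,l)=0$ to obtain $\partial\lambda^e/\partial x=-(\partial K/\partial x)/(\partial K/\partial\lambda)$ and read off the sign. Your treatment is more careful than the paper's (which simply assumes the needed smoothness and does not discuss the $\min(\lambda^0,\Lambda)$ truncation), and your monotone comparative statics alternative is a clean way to sidestep the regularity issue, but the underlying argument is the same.
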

\begin{proof}
We only prove the monotonicity in $p$ and the other two are similar. 
When a unique $\lambda^e$ exists and $K(\lambda^e)<0$ (in which case $\lambda^e=0$) or 
$K(\lambda^e)>0$ (in which case $\lambda^e=\Lambda$), then $\frac{\partial \lambda^e}{\partial p}= 0$, since the inequality is preserved for a small change in $p$.
When $K(\lambda^e)=0$, then 
$$\frac{\partial K}{\partial p}+\frac{\partial K}{\partial \lambda} \frac{\partial \lambda^e}{\partial p}=0.$$ 
However in this case, from Theorem \ref{theopld} ii(b) $\frac{\partial K}{\partial \lambda} <0$. In addition, $\frac{\partial K}{\partial p} <0$, thus, $\frac{\partial \lambda^e}{\partial p} <0$. 
\end{proof}

\section{Load Control, Policy Flexibility and Profit Maximization}\label{loadcontrol}

In this section we consider the problem from the point of view of the provider.
In particular, we are interested in the degree of flexibility that pricing/delay compensation policies offer to the provider to control the input rate of customers as well as maximize his/her profits under various constraints in the policy parameters. 

There are several occasions where a provider may be forced to fix some policy parameters for exogenous reasons. For example, the entrance fee $p$ may be fixed due to legislation or company policy. In this case the compensation is useful as an extension of the pricing strategy. In other situations, the firm may offer compensation for customers who wait more than an advertised time period. In such cases, the provider may have flexibility in adjusting the entrance fee and the compensation rate, but must keep the lead-time quotation at a fixed level. There are also cases where two policy parameters must be set, for example compensation rate and lead-time quotation, etc. 

In situations such as the above, several questions arise. Assume that the pricing and compensation policy is restricted in some parameters. A common approach is to identify the policy that maximizes the provider's profit under the given constraints, with no regard to the resulting input rate.
Herein we adopt a more general viewpoint. We find the range of input rates that may be achieved by varying the remaining free policy parameters, assess the degree of flexibility in achieving any particular input rate  and identify the values of the free parameters that maximize the profit for the given input rate. This approach effectively views the input rate as an additional parameter controllable by the provider. It may be very useful in situations where in addition to profits the provider must also consider additional factors. For example, startup companies that introduce new products or services may need a pricing strategy that generates a given demand pattern, not necessarily optimal during the introduction phase. This is usually done for achieving the most appropriate market penetration rate in the long term, depending on recognition of the company or the brand name, the intensity of competition, the available resources to ensure the desirable service levels, etc.

In the rest of the paper we refer to the case where the input rate $\lambda$ is explicitly considered part of the provider's strategy as the load control problem. 

Before we proceed, we formalize the notion of achievability of an input rate, either unconstrained or with some of the policy parameters fixed.

\begin{mydef}\label{def_achievable}
\normalfont
\begin{enumerate}
\item
  An input rate $\lambda$ is achievable if there exists a pricing/compensation policy $(d,p,l)$ under which $\lambda$ is the unique equilibrium, i.e.,  $\lambda^e(d,p,l) = \lambda$.
\item
  For an entrance fee $p$,  an input rate $\lambda$ is $p$-achievable if there exists a pair $(d,l)$, such that $\lambda^e(d,p,l)= \lambda$.
\item
  For a pair $(p,l)$ of entrance fee and cost compensation, $\lambda$ is $(p,l)$-achievable if there exists a lead-time $d$ such that $\lambda^e(d,p,l)=\lambda$.
\end{enumerate}
\end{mydef}

The definition of  $l, d, (d,p)$ and $(d,l)$-achievable input rates is similar. 

In these definitions it is required that the corresponding policies in each case result in $\lambda$ as the  unique equilibrium input rate. This is so, because if multiple equilibria exist under a policy $(d,p,l)$, then there is no way to ensure that a particular of those will occur, without using additional controls.

In the following subsections we analyze the load control problem and identify the ranges of achievable input rates, with the provider's policy flexibility varying from low level, where two of the policy parameters are fixed, to full flexibility where the provider is free to set all policy parameters as desired. Note that Theorem \ref{theopld} about equilibrium analysis can be considered as the other extreme case of no flexibility since, if all parameters are fixed, there is either a single or no achievable rate. 

\subsection{Low Policy Flexibility}\label{twofixed}

From the equilibrium analysis in Theorem \ref{theopld} and the monotonicity properties of function $K(\lambda,d,p,l)$  it follows that if two policy parameters, e.g., $(p,l)$, are fixed and $\lambda$ is $(p,l)$-achievable with $0< \lambda < \Lambda$, then the third policy parameter $d$ is uniquely determined and  it is  such that $K(\lambda, d, p, l) =0$.
On the other hand, if $\Lambda$ is $(p,l)$-achievable, then it is achieved by any lead-time $d$ such that $K(\Lambda, d, p, l) \geq 0$. 
From the monotonicity and continuity of $K$  in $d$, this inequality holds for $d$ at or below a maximum value.
In both cases we define the required lead-time $d^e(\lambda,p,l) = \max\{d : \lambda^e(d,p,l) = \lambda\}$, as the value that maximizes the provider's profit among all that achieve $\lambda$ under fixed $(p,l)$. 

Similarly we define the required entrance fee
$p^e(\lambda,d,l) =  \max\{p : \lambda^e(d,p,l) = \lambda\}$ 
and the required compensation cost
$l^e(\lambda,d,p) = \min\{l : \lambda^e(d,p,l) = \lambda\}$.

In the next theorem we establish the intervals of achievable input rates under pricing/compensation policies with two parameters fixed. Recall that the case $d \to  \infty$ is equivalent to setting $l=0$, because in this case the customer will never receive the compensation.

\begin{theorem}\label{achievable lambda}
\begin{itemize}
\item[i.] For a pair $(p,l)$ of entrance fee and delay compensation the achievable range of $\lambda$ is as follows.

If $p=R$, then the only achievable input rate is $\lambda = 0$.

If $p<R$ and $l<c$, then an input rate $\lambda$ is $(p,l)$-achievable if and only if $\lambda^{e}_{CF}(p;c) \leq \lambda \leq\lambda^{e}_{CF}(p;c-l)$.

If $p<R$ and $l=c$, then an input rate $\lambda$ is $(p,l)$-achievable if and only if
$\lambda \in [\lambda^{e}_{CF}(p;c) ,\mu) \cap [\lambda^{e}_{CF}(p;c),\Lambda]$.

\item[ii.] For a pair $(d,l)$ of lead-time and delay compensation with $d < \infty$,  the achievable range of $\lambda$ is as follows.

  If $l=c, d=0$, then the only achievable input rate is $\lambda = \Lambda$ if $\Lambda < \mu$, otherwise there is no achievable input rate.

  If $l<c$ or $d>0$, then an input rate $\lambda$ is  $(d,l)$-achievable if and only if 
  $0\leq\lambda\leq\lambda_{\max}(d,l)$, where
  \[\lambda_{\max}(d,l) =
    \begin{cases}
      0, & \quad \mbox{if \ } K(0, d, 0, l) \leq 0 \\  
      \lambda_0(d,l) & \quad  \mbox{if \ } K(\Lambda, d, 0, l) <  0 < K(0,d,0,l) \\  
      \Lambda, & \quad  \mbox{if \ } K(\Lambda, d, 0, l) \geq  0 
    \end{cases}
  \]
  and $\lambda_0(d,l)$ is the unique solution of $K(\lambda,d,0,l)=0$ in $\lambda$.
  
\item[iii.] For a pair $(d,p)$ of lead-time and entrance fee with $d<\infty$, the achievable range of $\lambda$ is as follows.

  If $p=R$, then the only achievable input rate is $\lambda = 0$.

  If $p<R$, then an input rate $\lambda$ is  $(d,p)$-achievable if and only if
   $\lambda^{e}_{CF}(p;c)\leq\lambda\leq\lambda_{\max}(d,p)$, where
  \[\lambda_{\max}(d,p) =
    \begin{cases}
      0, & \quad \mbox{if \ } K(0, d, p, c) \leq 0 \\  
      \lambda_0(d,l) & \quad  \mbox{if \ } K(\Lambda, d, p, c) <  0 < K(0,d,p,c) \\  
      \Lambda, & \quad  \mbox{if \ } K(\Lambda,d,p,c) \geq  0 
    \end{cases}
  \]
  and $\lambda_0(d,p)$ is the unique solution of $K(\lambda,d,p,c)=0$ in $\lambda$.

\end{itemize}
\end{theorem}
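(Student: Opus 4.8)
The three parts have a common architecture, so I would prove them in parallel. In each part one of $d,p,l$ is free and the other two are fixed; the plan is to (a) locate the two \emph{extreme values} of the free parameter, namely those that by Proposition~\ref{lambdae properties} minimize and maximize the (unique) equilibrium rate $\lambda^e$, and compute $\lambda^e$ there via Theorem~\ref{theopld}; (b) get the ``only if'' direction from the monotonicity of $\lambda^e$ in the free parameter, which squeezes every achievable rate between those two values; and (c) get the ``if'' direction by arguing directly with $K$: for a target $\lambda$ strictly interior and below $\Lambda$, Theorem~\ref{theopld}(ii) makes $\lambda$ the unique equilibrium under $(d,p,l)$ exactly when $K(\lambda,d,p,l)=0$, and since $K(\lambda,\cdot)$ is continuous and strictly monotone in the free parameter (decreasing in $p$ and $d$, increasing in $l$) and changes sign across the interval, the Intermediate Value Theorem produces the required parameter value; the two endpoints are realized by the extreme policies themselves, and $\lambda=\Lambda$ (when it lies in the range) by whichever extreme policy makes $K(\Lambda,\cdot)\ge 0$.

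The extreme values come from the reductions recorded before Theorem~\ref{theopld}. In part~(ii), with $(d,l)$ fixed and $l<c$ or $d>0$: at $p=R$, Theorem~\ref{theopld}(i)(b) gives $\lambda^e=0$, and at the lowest admissible fee $p=0$ one has $\lambda^e(d,0,l)$, whose value is precisely the three-branch quantity $\lambda_{\max}(d,l)$ --- that definition is just Theorem~\ref{theopld}(ii) evaluated at $p=0$, the middle branch using that $K(\Lambda,d,0,l)<0$ forces the interior root $\lambda_0(d,l)$ below $\Lambda$ so that $\min(\lambda_0,\Lambda)=\lambda_0$. In part~(iii), with $(d,p)$ fixed and $p<R$: at $l=0$ we have $K(\lambda,d,p,0)=K_{CF}(\lambda,p,c)$, hence $\lambda^e=\lambda^{e}_{CF}(p;c)$, and at $l=c$ we get $\lambda^e(d,p,c)=\lambda_{\max}(d,p)$, again by Theorem~\ref{theopld}(ii). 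In part~(i), with $(p,l)$ fixed, $p<R$, $l<c$: at $d=0$, $K(\lambda,0,p,l)=K_{CF}(\lambda,p,c-l)$ gives $\lambda^e=\lambda^{e}_{CF}(p;c-l)$, while letting $d\to\infty$ removes the compensation, $K(\lambda,d,p,l)\to K_{CF}(\lambda,p,c)$ and $\lambda^e\to\lambda^{e}_{CF}(p;c)$; since $\lambda^e$ is decreasing in $d$ these are the max and the min, and the claimed interval follows (it is nonempty because $K_{CF}(\cdot,p,c-l)\ge K_{CF}(\cdot,p,c)$ forces $\lambda^{e}_{CF}(p;c-l)\ge\lambda^{e}_{CF}(p;c)$).

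The sign changes needed in step~(c) are elementary once the monotonicities of $K$ are in hand: at one extreme $K(\lambda,\cdot)<0$ --- e.g. $K(\lambda,d,R,l)<0$ for $l<c$ or $d>0$ exactly as in the proof of Theorem~\ref{theopld}(i)(b), or $K(\lambda,d,p,0)=K_{CF}(\lambda,p,c)<0$ because $\lambda>\lambda^{e}_{CF}(p;c)$ and $K_{CF}$ is strictly decreasing in $\lambda$ --- and at the other extreme $K(\lambda,\cdot)>0$ because $\lambda$ lies below the relevant root (or below $\Lambda$ with $K(\Lambda,\cdot)\ge 0$) and $K$ is decreasing in $\lambda$. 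The degenerate regimes are then peeled off and settled by Theorem~\ref{theopld} directly: $p=R$ in parts~(i) and~(iii) forces $\lambda^e=0$ except in the knife-edge $l=c,d=0$, where equilibria are non-unique so nothing is achievable, leaving $\lambda=0$ as the only achievable rate; and $l=c,d=0$ in part~(ii), where $K\equiv U(R-p)-U(0)$ is constant in $\lambda$, gives $\lambda^e=\Lambda$ for every $p<R$ when $\Lambda<\mu$ and no equilibrium when $\Lambda\ge\mu$.

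The genuinely delicate case is $l=c$ with $d$ free in part~(i): there $\lim_{\lambda\to\mu-}K(\lambda,d,p,c)=U(R-p-cd)-U(0)$, nonnegative for small $d$ and negative for large $d$, so as $d$ decreases one passes from Theorem~\ref{theopld}(ii)(b) into (ii)(c), and $\lambda^e$ either rises all the way to $\Lambda$ (if $\Lambda<\mu$) or the interior root climbs toward $\mu$ without attaining it (if $\Lambda\ge\mu$); this is exactly why the achievable set there is the half-open, $\Lambda$-capped set $[\lambda^{e}_{CF}(p;c),\mu)\cap[\lambda^{e}_{CF}(p;c),\Lambda]$ rather than a closed bounded interval. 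I expect this case --- reconciling that description with the regime switch as $d$ varies, and the accompanying (routine but fiddly) check that $\lambda^e$ is continuous across the boundaries between $\lambda^e=0$, the interior solution, and $\lambda^e=\Lambda$ --- to be the main obstacle; the rest is bookkeeping on top of Theorem~\ref{theopld}, Proposition~\ref{lambdae properties}, and the Intermediate Value Theorem.
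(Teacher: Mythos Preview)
Your proposal is correct and follows essentially the same approach as the paper: both argue by evaluating $K$ at the extreme values of the free parameter (via the reductions to $K_{CF}$ and Theorem~\ref{theopld}) and then invoke monotonicity and continuity of $K$ in that parameter to sweep out the intermediate interval. The only cosmetic difference is packaging --- you frame the ``only if'' direction through Proposition~\ref{lambdae properties} (monotonicity of $\lambda^e$), whereas the paper works directly with the sign conditions on $K$, splitting into the cases $\lambda=0$, $0<\lambda<\Lambda$, and $\lambda=\Lambda$.
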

\begin{proof}

 i)
 If $p=R$, then from case(i) of Theorem \ref{theopld}, it follows that the only arrival rate that can be unique equilibrium is $\lambda=0$.  
 
Next consider the case $p<R, l<c$.
Then $\lambda=0$ is achievable if there exists $d$ (including the limit $d\to \infty$ which corresponds to no compensation), such that $K(0,d,p,l)\leq 0$. Since $K(0,d,p,l)$ is decreasing in $d$, this is true if and only if $\lim_{d \to \infty} K(0, d,p,l) \leq 0$, i.e., $K_{CF}(\lambda,p,c) \leq 0$ thus, $\lambda^{e}_{CF}(p;c)=0$. Similarly, $\lambda=\Lambda$ is achievable if there exists $d$ such that $K(\Lambda,d,p,l)\geq 0$. This is true if and only if $K(\Lambda,0,p,l)\geq 0$, i.e., $K_{CF}(\Lambda,p,c-l)\geq 0$ and $\lambda^{e}_{CF}(p;c-l)=\Lambda$.

Finally, a rate $\lambda$ such that $0<\lambda<\Lambda$ is achievable if and only if there exists $d$ such that $K(\lambda,d,p,l)=0$.
From the monotonicity of $K$ in $d$, this is true if and only if $K_{CF}(\lambda,p,c) \leq 0 \leq K_{CF}(\lambda,p,c-l)$, i.e., $\lambda^{e}_{CF}(p;c) \leq \lambda \leq\lambda^{e}_{CF}(p;c-l)$.

The proof for the case $p<R, l=c$ is entirely similar, with the only difference that for $l=c$ the equilibrium rate $\lambda^e_{CF}(p;c-l)=\lambda^e_{CF}(p;c)$ does not exist. However in this case by letting $d\to 0$, all rates below $\mu$ and $\Lambda$ can be achieved. 

ii)
First assume $l=c, d=0$. From Theorem \ref{theopld} only $\Lambda$ is achievable, if $\Lambda <\mu$, by setting any $p<R$. 

Next consider $l<c$ or $d>0$. From Theorem \ref{theopld}(ib), rate $\lambda=0$ is always $(d,l)$-achievable, by setting $p=R$. From Theorem \ref{theopld}(ii), rate $\lambda=\Lambda$ is $(d,l)$-achievable if there exists $p<R$ such that $K(\Lambda, d,p,l)\geq 0$. Since $K(\Lambda, d,p,l)$ is decreasing in $p$, this is true if and only if $K(\Lambda, d, 0,l) \geq 0$.
Finally, a rate $\lambda$ in the interval $0 < \lambda < \Lambda$ is a unique equilibrium if and only if there exists $p$ such that $K(\lambda, d, p, l)=0$. Since $K(\lambda, d, p, l)$ is decreasing in $p$ and $K(\lambda, d, R, l) < 0$,
$\lambda$ is $(d,l)$-achievable if and only if $K(\lambda, d, 0, l) \geq 0$.

We have thus found that any $\lambda > 0$ is $(d,l)$-achievable if and only if $K(\lambda, d, 0, l) \geq 0$. Since $K(\lambda, d, 0,l)$ is strictly decreasing in $\lambda$, it follows that the inequality is true for $\lambda \leq \lambda_{\max}(d,l)$ as defined above. 

iii)
If $p=R$, then rom Theorem \ref{theopld}(ia) the only achievable input rate is $\lambda = 0$.

Next consider $p<R$. Then, $\lambda=0$ is achievable if there exists $l$ such that $K(0,d,p,l)\leq 0$, and since $K(0,d,p,l)$ is increasing in $l$, this is true if and only if $K(0,d,p,0) \leq 0$.
Similarly,  $\lambda=\Lambda$ is achievable if there exists $l$ such that $K(\Lambda,d,p,l)\geq 0$, i.e.,  if and only if $K(\Lambda,d,p,c) \geq 0$.
Finally, a rate $\lambda$ such that $0 < \lambda < \Lambda$ is achievable if there exists $l$ such that $K(\lambda, d,p,l)=0$, which holds if  $K(\lambda, d, p,0)\leq 0 \leq K(\lambda, d,p,c)$. 

We have thus found that any $\lambda \in [0,\Lambda]$ is $(d,l)$-achievable if and only if  $K(\lambda, d, p,0)\leq 0 \leq K(\lambda, d,p,c)$. 
Since $K(\lambda, d,p,0) = K_{CF}(\lambda, p, c)$, the first inequality is true if and only if $\lambda \geq \lambda_{CF}(p;c)$.
The second inequality,  since $K(\lambda, d, p,c)$ is decreasing in $\lambda$, holds
for $\lambda  \leq \lambda_{max}(d,p)$, where $\lambda_{max}(d,p)$ is as defined above. 

Summarizing the above, we obtain the achievable range $ \lambda_{CF}(p;c) \leq \lambda  \leq \lambda_{max}(d,p)$.
\end{proof}

\subsection{High Policy Flexibility} \label{onefixed}

In this subsection we explore to what extend the provider can control the input rate when only one policy parameter is fixed and he or she has flexibility on the remaining two parameters. For any value of the fixed policy parameter, there are generally infinite pairs of values of the other two parameters which result in a desired achievable input rate. They can be computed from the equilibrium conditions and have the form of pricing curve. For example, when the entrance fee $p$ is fixed, the set of $(d, l)$ pairs that result in a given input rate $\lambda$ form the corresponding $(d,l)$-pricing curve.
The provider may want to maximize his/her profit along any such pricing curve. 

In the next theorem we establish the intervals of achievable input rates. 
In the section on numerical experiments we derive the form of the pricing curves and consider the corresponding profit maximization problem, for the case of a negative exponential utility function. 

\begin{theorem}\label{onefixedachievable}
\begin{itemize}
\item[i.] For a fixed entrance fee $p$ an input rate $\lambda$ is $p$-achievable if and only if:   
$\lambda\in[\lambda_{CF}(p;c),\mu) \cap [\lambda_{CF}(p'c),\Lambda]$. 
\item[ii.] For a fixed delay compensation $l<c$ an input rate $\lambda$ is $l$-achievable if and only if:    
$\lambda\in[0,\min(\lambda^e(0,0,l),\Lambda)] \cap [0,\mu)$. For $l=c$ the achievable input rate interval is $[0,\Lambda] \cap [0,\mu)$.
\item[iii.] For a fixed lead-time quotation $d>0$ an input rate $\lambda$ is $d$-achievable if and only if:   
$\lambda\in[0,\min(\lambda^e(d,0,c),\Lambda)] \cap [0,\mu)$. For $d=0$ the achievable input rate interval is $[0,\Lambda] \cap [0,\mu)$.
\end{itemize}
\end{theorem}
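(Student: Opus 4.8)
The plan is to obtain each one-parameter achievable set as a union, over the two remaining free parameters, of the two-parameter achievable sets already described in Theorem~\ref{achievable lambda}, and then to collapse that union to a single interval by using the monotonicity of $K(\lambda,d,p,l)$ in the free parameters --- which carries over to monotonicity of the endpoint maps $\lambda^{e}_{CF}(\,\cdot\,;\,\cdot\,)$ and $\lambda_{\max}(\,\cdot\,,\,\cdot\,)$ --- together with their continuity and the intermediate value theorem. The degenerate boundaries $p=R$, $l=c$ and $d=0$, at which the equilibrium structure of Theorem~\ref{theopld} changes (multiple equilibria or non-existence), are handled as separate cases.

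For part (i) I would fix $p<R$; the case $p=R$ is degenerate, since by Theorem~\ref{theopld}(i) the only rate arising as a \emph{unique} equilibrium is $\lambda=0$. By Definition~\ref{def_achievable}, $\lambda$ is $p$-achievable iff it is $(p,l)$-achievable for some $l\in[0,c]$, so the $p$-achievable set equals $\bigcup_{l\in[0,c]}\{(p,l)\text{-achievable}\}$. Theorem~\ref{achievable lambda}(i) shows every member of this union is an interval with left endpoint exactly $\lambda^{e}_{CF}(p;c)$, independent of $l$, and contained in $[0,\mu)\cap[0,\Lambda]$; moreover its $l=c$ member already equals $[\lambda^{e}_{CF}(p;c),\mu)\cap[\lambda^{e}_{CF}(p;c),\Lambda]$. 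Hence the union is exactly that interval.

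For part (ii), fixing $l<c$, I would write the $l$-achievable set as $\bigcup_{d\ge0}\{(d,l)\text{-achievable}\}=\bigcup_{d\ge0}[0,\lambda_{\max}(d,l)]$ via Theorem~\ref{achievable lambda}(ii). Since $K$ is strictly decreasing in $d$, the set $\{\lambda:K(\lambda,d,0,l)\ge0\}$ shrinks as $d$ grows, so $\lambda_{\max}(d,l)$ is nonincreasing in $d$ and the nested union collapses to $[0,\lambda_{\max}(0,l)]\cap[0,\mu)$. It then remains to identify $\lambda_{\max}(0,l)$: since $K(\lambda,0,0,l)=K_{CF}(\lambda,0,c-l)\to-\infty$ as $\lambda\to\mu$ by \eqref{Kneginf} (whenever it is positive at $\lambda=0$), comparing the definition of $\lambda_{\max}(0,l)$ with the cases of Theorem~\ref{theopld}(ii) yields $\lambda_{\max}(0,l)=\min(\lambda^{e}(0,0,l),\Lambda)$. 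The boundary case $l=c$ is treated separately: $\lambda_{\max}(d,c)$ is again nonincreasing in $d>0$, and since $\min(X,d)\to0$ as $d\to0$ forces $K(\lambda,d,0,c)\to U(R)-U(0)>0$ for every $\lambda<\mu$, every $\lambda\in[0,\mu)\cap[0,\Lambda]$ is $(d,c)$-achievable for $d$ small enough; together with the $d=0$ member this gives the interval $[0,\Lambda]\cap[0,\mu)$.

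Part (iii) is the delicate one and I expect it to be the main obstacle. Fixing $d>0$, Theorem~\ref{achievable lambda}(iii) writes the $d$-achievable set as $\bigcup_{p\in[0,R]}[\lambda^{e}_{CF}(p;c),\lambda_{\max}(d,p)]$, but now \emph{both} endpoints move with $p$: strict monotonicity of $K$ in $p$ makes $\lambda^{e}_{CF}(p;c)$ and $\lambda_{\max}(d,p)$ continuous and nonincreasing in $p$, with $\lambda^{e}_{CF}(R;c)=0$ and $\lambda_{\max}(d,R)=0$ (because $K(\lambda,d,R,c)=E(U(-c\min(X,d)))-U(0)<0$), while the largest right endpoint is attained at $p=0$. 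The union contains $0$ (from $p=R$) and $\lambda_{\max}(d,0)$ (from $p=0$), and for any $\lambda$ strictly in between the intermediate value theorem produces $p^{\ast}$ with $\lambda_{\max}(d,p^{\ast})=\lambda$; since $[\lambda^{e}_{CF}(p^{\ast};c),\lambda_{\max}(d,p^{\ast})]$ is a nonempty interval we have $\lambda^{e}_{CF}(p^{\ast};c)\le\lambda$, so $\lambda$ lies in the union. Hence the $d$-achievable set is $[0,\lambda_{\max}(d,0)]\cap[0,\mu)$, and identifying $\lambda_{\max}(d,0)$ with $\min(\lambda^{e}(d,0,c),\Lambda)$ through Theorem~\ref{theopld}(ii) and \eqref{Kneginfc} finishes the case $d>0$; the case $d=0$ follows as in part (ii) by letting the free compensation $l\to c$. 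The crux throughout is turning ``a union of intervals'' into one exact interval, which requires monotonicity \emph{and} continuity of the endpoint functions plus an IVT argument for connectedness in (iii), combined with the limit relations \eqref{Kneginf}--\eqref{Kneginfc} to pin down the endpoints at the degenerate boundaries.
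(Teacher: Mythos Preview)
Your proposal is correct and at bottom relies on the same mechanism as the paper's proof: the monotonicity of $K(\lambda,d,p,l)$ in each free parameter pins down extreme (best and worst for the customer) configurations, and the achievable range is the interval between the equilibria at those extremes. The paper's proof is in fact only a short sketch stating exactly this --- for each of (i)--(iii) it names the worst and best $(d,l)$, $(d,p)$ or $(p,l)$ combination and stops.

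Where your treatment differs is in rigor and organization rather than in strategy. You route the argument through Theorem~\ref{achievable lambda}, writing the one-parameter achievable set as a union of two-parameter intervals and then collapsing the union. In parts~(i) and~(ii) this is overkill (the $l=c$ and $d=0$ members alone already give the maximal interval, as you note), but in part~(iii) it actually fills a gap the paper's sketch leaves open: when $d$ is fixed, both endpoints $\lambda^{e}_{CF}(p;c)$ and $\lambda_{\max}(d,p)$ move with $p$, so one genuinely needs the continuity of the endpoint maps and an intermediate-value step to conclude that the union is connected. The paper's ``worst/best case'' language glosses over this. Your identifications $\lambda_{\max}(0,l)=\min(\lambda^{e}(0,0,l),\Lambda)$ and $\lambda_{\max}(d,0)=\min(\lambda^{e}(d,0,c),\Lambda)$ via Theorem~\ref{theopld} and \eqref{Kneginf}--\eqref{Kneginfc} are correct and make explicit what the paper leaves implicit.
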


\begin{proof}
The proof is based on identifying the worst and best parameter combination for customers in each case. Specifically, for a fixed entrance fee $p$, the worst case for customers is to receive no compensation, which corresponds to setting $l=0$ or letting $d \to \infty$, and the best case is to be fully compensated from the entire time they spend in the system. For a fixed compensation rate $l$, the worst case for customers is to pay an entrance fee equal to their value of service for a lead-time that tends to infinity. On the other hand the best case is to receive the service with free entrance and be compensated for the entire time they spend in the system. Finally, for a fixed lead-time $d$, the worst scenario for customers is to pay an entrance fee equal to the service reward and receive a compensation rate arbitrarily close to zero, and the best scenario is to receive the service with free entrance and be compensated at full rate $l=c$. 
\end{proof}

From Theorem \ref{onefixedachievable} we can also assess the flexibility available to the service provider by the lead time quotation and compensation policy, compared to single entrance fee pricing. Specifically, from part (i) of the Theorem it follows that if the entrance fee $p$ is fixed and no delay compensation is allowed, then the only achievable input rate is $\lambda_{CF}(p;c)$. On the other hand, when the provider has the additional flexibility of offering a lead time quotation and delay compensation, then by setting parameters $d$ and $l$ appropriately, he or she can achieve all higher values of $\lambda$, either up to the market size $\Lambda$ or the stability limit $\mu-$. Therefore, the enhanced policy allows to increase the input rate to any desirable level above the base value achievable with a single entrance fee. In addition, any rate in this interval can be achieved by an infinite collection of parameter values lying on the corresponding $(d,l)$-pricing curve.

In Section \ref{numeric} we explore the behavior of the pricing curves and the corresponding profit maximization problem under the high flexibility setup.

\subsection{Full Policy Flexibility and Profit Maximization}\label{profitmax}

In this section we focus on the case when the provider has flexibility in all three policy parameters $d,p,l$. Combining the results of the previous two cases it follows that the provider can induce any value of $\lambda$, i.e., the region of achievable input rates is $[0, \mu) \cap [0,\Lambda]$. 

In this subsection we also analyze the provider's profit maximization problem under full flexibility. The main result is that the possibility of delay compensation allows the provider to regain essentially the entire loss in profits due to customer risk aversion. However due to the properties of the customer equilibrium strategies there is no strictly optimal policy; instead, there are policies that achieve profit arbitrarily close to the optimal level under risk-neutral customers. 

The provider's profit function can be expressed in several equivalent forms. To do this we start with a general expression. Let: 
\begin{equation} \label{g1}
G_1(\lambda, d, p, l)=\lambda \left( p-l\frac{e^{-(\mu-\lambda)d}}{\mu-\lambda} \right), 
\end{equation} 
denote the provider's profit under policy $(d, p, l)$ and input rate $\lambda$, not necessarily in equilibrium.
Assuming that the customers respond strategically to policy $(d, p, l)$ the provider's actual profit function is equal to: 
\begin{equation} \label{profit}
G(d, p, l)=G_1(\lambda^{e}(d, p, l), d, p, l)=\lambda^{e}(d, p, l)\left( p-lL^{e}(d, p, l)\right), 
\end{equation}
$L^{e}(d, p, l)=E\left( (X-d)^{+}|\lambda^{e}(d, p, l)\right)$ is the expected lateness in steady state under the equilibrium input rate $\lambda^{e}(d, p, l)$. 

As discussed in the beginning of this section, if the equilibrium input rate is not unique under a policy $(d,p,l)$, then the provider cannot enforce any particular value.  Therefore, in order for the profit maximization problem to be well defined, we restrict the pricing policies to those that  result in achievable input rates.

Using the notion of achievable input rates, we can also express profit maximization as a two stage optimization problem where in the second stage the provider determines the pricing/compensation policy that achieves any achievable input rate with the maximum possible profit and in the first stage determines the optimal achievable input rate, i.e., 

\begin{equation}\label{Gstar}
G^{*}=\sup\limits_{d, p, l}G(d, p, l) =\sup\limits_{\lambda}H(\lambda)\\
\end{equation}
where,
\begin{equation*}
H(\lambda)=\sup\limits_{d, p, l}\lbrace G(d, p, l) :\lambda^e(d, p, l)=\lambda\rbrace .
\end{equation*}

An input rate $\lambda$ is achievable if and only if it is $(p,l)$-achievable for at least one pair $(p,l)$. Therefore, $H(\lambda)$ can be expressed as
\begin{equation}\label{Hdef}
H(\lambda)=\sup\limits_{p, l} G_1(\lambda, d^e(\lambda, p, l), p, l)  ,
\end{equation}
since $d^e(\lambda, p, l)$ is the lead-time that maximizes profits among those that achieve $\lambda$ under $(p,l)$. 

Note that we could equivalently take the supremum with respect to $(d,p)$ or $(d,l)$ and use the required compensation $l^e$ or entrance fee $p^e$, respectively.

We will show below that in general the supremum in the above expressions cannot be attained. The reason is that for any achievable $\lambda$, by using a pricing/compensation policy $(d,p,l)$ that approaches $(0,R,c)$ the profit inreases. However, the policy $(0,R,c)$ itself does not correspond to any achievable $\lambda$, since under it all $\lambda$ are equilibrium and the provider cannot enforce any of them without further actions. We formalize these arguments in Propositions \ref{GRNGU} and \ref{Hsup} and Theorem \ref{eoptimal}.

\begin{proposition}\label{GRNGU}
   For any utility function $U$ that satisfies Assumption \ref{assump U}, the provider's profit is bounded above by 
$$G_1(\lambda, d^{e}(\lambda, p, l), p, l)< \lambda\left( R-\frac{c}{\mu-\lambda}\right),$$ 
for all $\lambda>0$ and  $(p, l)$ such that $\lambda$ is $(p,l)$-achievable.
\end{proposition}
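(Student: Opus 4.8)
The plan is to rewrite the asserted inequality as a statement about the \emph{mean} net benefit of a joining customer, and then deduce it from Jensen's inequality together with the equilibrium equation $K(\lambda,d^e,p,l)=0$. Write $X$ for the steady-state sojourn time under input rate $\lambda$, which is exponential with rate $\mu-\lambda$, so that $E(X\mid\lambda)=1/(\mu-\lambda)$ and $L^e:=E\big((X-d^e)^+\mid\lambda\big)=e^{-(\mu-\lambda)d^e}/(\mu-\lambda)$, and set $Z=R-p-cX+l(X-d^e)^+ = R-p-(c-l)X-l\min(X,d^e)$. Dividing the claimed inequality $G_1(\lambda,d^e,p,l)=\lambda(p-lL^e)<\lambda(R-c/(\mu-\lambda))$ by $\lambda>0$ and rearranging, it is equivalent to $E(Z\mid\lambda)>0$. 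So the whole proposition reduces to showing that, for $(p,l)$-achievable $\lambda>0$, the mean net benefit of a customer under the most-profitable achieving lead-time $d^e=d^e(\lambda,p,l)$ is strictly positive.

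Next I would bring in the equilibrium condition. Because $\lambda$ is $(p,l)$-achievable and $d^e$ is the largest lead-time with $\lambda^e(d^e,p,l)=\lambda$, Theorem \ref{theopld} (together with continuity and monotonicity of $K$ in $d$) yields $K(\lambda,d^e,p,l)=E(U(Z))-U(0)\ge 0$, with equality when $0<\lambda<\Lambda$; in particular $E(U(Z))\ge U(0)$. Concavity and strict monotonicity of $U$ then give, via $U(E(Z))\ge E(U(Z))\ge U(0)$, the weak bound $E(Z)\ge 0$, i.e. $G_1(\lambda,d^e,p,l)\le\lambda(R-c/(\mu-\lambda))$.

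The remaining, and main, task is to upgrade this to a strict inequality, which amounts to showing that \eqref{Jensen} is strict, i.e. that $Z$ places positive probability on the set $(-\infty,z')$ where $U$ is strictly concave. When $l<c$ the quantity $-(c-l)X-l\min(X,d^e)$ is unbounded below since $X$ has full support on $(0,\infty)$, so $P(Z<z')>0$ and strict Jensen gives $U(0)\le E(U(Z))<U(E(Z))$, forcing $E(Z)>0$; the same argument works if $l=c$ but $d^e=\infty$, since then $Z=R-p-cX$. When $l=c$ and $d^e=0$ we have $Z\equiv R-p$, and since $\lambda>0$ forces $p<R$ by Theorem \ref{achievable lambda}(i), $E(Z)=R-p>0$ directly.

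The delicate case, which I expect to be the real obstacle, is $l=c$ with $0<d^e<\infty$: there $Z=R-p-c\min(X,d^e)$ is bounded below by $R-p-cd^e$, so strict concavity of $U$ on $(-\infty,z']$ need not automatically apply. Here I would use that for $\lambda$ to be a unique equilibrium, Theorem \ref{theopld}(ii) together with the limiting identity \eqref{Kneginfc} forces $R-p-cd^e<0$, so $0$ lies in the interior of the support of $Z$ and the equilibrium relation (now an equality by continuity in $d$) reads $E(U(Z))=U(0)$. Assuming $E(Z)=0$ for contradiction, the equality case of Jensen would make $U$ affine on the support interval $[R-p-cd^e,R-p]$, which has to be reconciled with Assumption \ref{assump U}(ii) — e.g. by localising the strict concavity so that it overlaps this interval, or by a limiting argument. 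Making this final contradiction airtight under only the minimal hypothesis Assumption \ref{assump U}(ii) is the step that requires the most care.
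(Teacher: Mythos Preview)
Your approach is essentially the paper's: reduce the claimed bound to $E(Z)>0$ for $Z=R-p-cX+l(X-d^e)^+$, combine the equilibrium inequality $E(U(Z))\ge U(0)$ with strict Jensen $E(U(Z))<U(E(Z))$, and use monotonicity of $U$. The paper proceeds exactly this way.

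Where you are \emph{more careful} than the paper is in the case $l=c$ with $0<d^e<\infty$. The paper argues that $P(Z<z')>0$ simply because ``$X$ is exponentially distributed, thus it can take arbitrarily large values with positive probability.'' That inference is valid only when $l<c$, since then $Z=R-p-(c-l)X-l\min(X,d^e)$ is unbounded below. When $l=c$, however, $Z=R-p-c\min(X,d^e)\in[R-p-cd^e,\,R-p)$ is bounded, and nothing in Assumption~\ref{assump U}(ii) forces $R-p-cd^e<z'$; if $U$ happens to be affine on this interval, strict Jensen fails and the argument yields only $E(Z)\ge 0$. So the concern you flag is real, and the paper's own proof does not address it. Your proposed route --- using \eqref{Kneginfc} to force $R-p-cd^e<0$ and then arguing by contradiction against affinity --- is the natural attempt, but as you note it does not close without an additional hypothesis localizing the strict concavity near $0$ (or near the support of $Z$).

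One small remark on your case split: when $l=c$ and $\lambda>0$ is $(p,c)$-achievable (so $p<R$), the sub-case $d^e=0$ cannot actually occur. Indeed $K(\lambda,0,p,c)=U(R-p)-U(0)>0$ for every $\lambda<\mu$, so $\lambda^e(d,p,c)=\Lambda$ for all sufficiently small $d>0$, forcing $d^e(\lambda,p,c)>0$. Thus the only genuinely delicate sub-case is the one you singled out.
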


\begin{proof}
 If $\lambda < \Lambda$, then the equilibrium condition is 
  $K(\lambda, d, p, l)  =0,
  $
  while for $\lambda = \Lambda$,
  $
  K(\Lambda, d, p, l)\geq 0.
  $
  In both cases
  $$
  U(0) \leq E(U(R-p-cX+l(X-d^{e}(\lambda, p, l))^{+})).
  $$
  
  However, since $\lambda>0$ and $(p,l)$-achievable, it follows from Theorem  \ref{theopld} that $l<c$ or $d^{e}(\lambda, p, l))>0$. In addition, $X$ is exponentially distributed, thus it can take arbitrarily large values with positive probability. Therefore, from Assumption \ref{assump U} :
  $$
  P(R-p-cX+l(X-d^{e}(\lambda, p, l))^{+}<z') > 0,
  $$
  and from \eqref{Jensen} it follows that
  $$
  E(U(R-p-cX+l(X-d^{e}(\lambda, p, l))^{+})) < U(E(R-p-cX+l(X-d^{e}(\lambda, p, l))^{+})). 
  $$

Combining the last two inequalities we obtain 
$$U(0) < U(R-p-\frac{c}{\mu-\lambda}+lL(\lambda, d^{e}(\lambda, p, l)))
$$
and, since $U$ is increasing:
$$0< R-p-\frac{c}{\mu-\lambda}+lL(\lambda, d^{e}(\lambda, p, l)).$$
from which part (i) follows.
\end{proof}

We next show that $H(\lambda)=\lambda\left( R-\frac{c}{\mu-\lambda}\right)$ but the supremum in \eqref{Hdef} is not attained.

\begin{proposition}\label{Hsup}
For any $\lambda \in (0,\mu) \cap (0,\Lambda]$, 
\begin{itemize}
\item[i.] $H(\lambda)=\lambda\left( R-\frac{c}{\mu-\lambda}\right)$.
\item[ii.] There is no $(p, l)$ such that $\lambda$ is $(p, l)$-achievable and $G_1(\lambda, d^{e}(\lambda, p, l), p, l)=H(\lambda).$
\end{itemize}
\end{proposition}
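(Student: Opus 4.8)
The plan is to combine the strict upper bound already furnished by Proposition \ref{GRNGU} with an explicit sequence of admissible policies whose profit converges to that bound. Granting part (i), part (ii) is then immediate: Proposition \ref{GRNGU} asserts $G_1(\lambda, d^{e}(\lambda, p, l), p, l) < \lambda\left(R - \frac{c}{\mu-\lambda}\right)$ for \emph{every} $(p,l)$ under which $\lambda$ is $(p,l)$-achievable, so once the right-hand side is identified with $H(\lambda)$, the supremum in \eqref{Hdef} cannot be attained by any single policy. For part (i), taking the supremum over $(p,l)$ in Proposition \ref{GRNGU} gives $H(\lambda) \le \lambda\left(R - \frac{c}{\mu-\lambda}\right)$ directly, so the content lies entirely in the reverse inequality.

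To show $H(\lambda) \ge \lambda\left(R - \frac{c}{\mu-\lambda}\right)$ I would drive a family of policies toward the corner $(d,p,l) = (0,R,c)$ while staying on the equilibrium surface. Fix $\lambda \in (0,\mu)\cap(0,\Lambda]$, pick any sequence $d_n \downarrow 0$, set $l_n = c$, and let $p_n$ be the solution in $p$ of $K(\lambda, d_n, p, c) = 0$ (with $\Lambda$ in place of $\lambda$ when $\lambda = \Lambda$). Since $K$ is continuous and strictly decreasing in $p$, is negative at $p = R$ (the argument $-c\min(X,d_n)$ being strictly negative with positive probability) and positive at $p = R - cd_n$ (there the argument equals $c(d_n-\min(X,d_n))\ge 0$, strictly positive with positive probability), this $p_n$ exists, is unique, and satisfies $R - cd_n < p_n < R$; hence $p_n \to R$. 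Because $\lambda^{e}_{CF}(p;c) \to 0$ as $p \to R^{-}$, for $n$ large we have $\lambda \ge \lambda^{e}_{CF}(p_n;c)$, so Theorem \ref{achievable lambda}(i) shows $\lambda$ is $(p_n, c)$-achievable; moreover, from the characterization of the required lead-time in Section \ref{twofixed} (the unique $d$ solving $K(\lambda, d, p_n, c)=0$ when $\lambda < \Lambda$, and the maximal $d$ with $K(\Lambda, d, p_n, c) \ge 0$ when $\lambda = \Lambda$) we obtain $d^{e}(\lambda, p_n, c) = d_n$. Therefore
$$ H(\lambda) \ge G_1\!\left(\lambda, d^{e}(\lambda, p_n, c), p_n, c\right) = \lambda\left(p_n - c\,\frac{e^{-(\mu-\lambda)d_n}}{\mu-\lambda}\right), $$
and letting $n \to \infty$ with $p_n \to R$ and $d_n \to 0$ yields $H(\lambda) \ge \lambda\left(R - \frac{c}{\mu-\lambda}\right)$, which together with the upper bound proves part (i).

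The routine limit step is harmless; the delicate part is the admissibility bookkeeping — confirming that each triple $(d_n, p_n, c)$ induces $\lambda$ (resp.\ $\Lambda$) as the \emph{unique} equilibrium rather than $0$ or $\Lambda$, handling the boundary sub-case $\lambda = \Lambda$ where equilibrium is pinned down by an inequality rather than an equation, and verifying $d^{e}(\lambda, p_n, c) = d_n$ so that the profit we evaluate is exactly the quantity inside the supremum defining $H(\lambda)$. Each of these reduces to the strict monotonicity of $K$ in $\lambda$, $p$ and $d$ — which excludes cases (a) and (c) of Theorem \ref{theopld}(ii) when $\lambda < \Lambda$ and makes them collapse to $\lambda^{e}=\Lambda$ when $\lambda = \Lambda$ — together with the limiting identity \eqref{Kneginfc} for the behavior of $K$ as the input rate approaches $\mu$.
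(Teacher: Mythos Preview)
Your proof is correct and follows essentially the same route as the paper: both obtain the upper bound from Proposition~\ref{GRNGU} and the lower bound via a family of policies with $l=c$ converging to the corner $(d,p,l)=(0,R,c)$, invoking Theorem~\ref{achievable lambda}(i) for achievability. The only difference is the parametrization of that family --- the paper fixes $p=R-\delta$ and proves $d^{e}(\lambda,R-\delta,c)\to 0$ by contradiction, whereas you fix $d_n\downarrow 0$ and get $p_n\to R$ directly from the bracket $R-cd_n<p_n<R$, which is a slightly cleaner way to the same limit.
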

\begin{proof}
  i) Fix a  $\lambda \in (0,\mu) \cap (0,\Lambda]$.
  From Proposition $\ref{GRNGU}$ it follows that
  $$
  H(\lambda) =  \sup_{p, l}G_1(\lambda, d^{e}(\lambda, p, l), p, l) \leq
  \lambda\left( R-\frac{c}{\mu-\lambda}\right).
  $$

 For the above to  hold as equality,  it must also be true  that for any $\epsilon>0$ there exist $p,l$ such that $\lambda$ is $(p,l)$-achievable and
 \begin{equation}
   \label{eq:Gconv}
 G_1(\lambda, d^{e}(\lambda, p, l), p, l)>\lambda\left( R-\frac{c}{\mu-\lambda}\right)-\epsilon.   
 \end{equation}

To prove this, we will show that a pair $(p,l)$ of the form $p=R-\delta, l=c$, for $\delta>0$ sufficiently small can achieve both requirements.

We first show that there exists $\tilde{\delta} > 0$, such that  $\lambda$ is $(R-\delta,c)$-achievable for $\delta < \tilde{\delta_1}$. 
From Theorem  $\ref{achievable lambda}$ we know that for $p<R$ and $l=c$, $\lambda$ is achievable if and only if $\lambda^e_{CF}(p;c) \leq \lambda$. Furthermore, $\lambda^e_{CF}(R;c)=0$, since no customer will join if the entrance fee is equal to $R$ and there is no waiting cost compensation.
Thus, there exists a $\tilde{\delta}>0$ such that for any $\delta<\tilde{\delta}: \lambda^{e}_{CF}(R-\delta;c) \leq \lambda$.

We next prove (\ref{eq:Gconv}). 
For any $\delta<\tilde{\delta}$, the required lead-time $d^{e}(\lambda, R-\delta, c)$ is uniquely defined. In addition, the provider's profit can be expressed as 
\begin{eqnarray*}
G_1(\lambda, d^{e}(\lambda, R-\delta, c), R-\delta, c)&=&\lambda \left( R-\delta-c \frac{e^{-(\mu-\lambda)d^{e}(\lambda, R-\delta, c)}}{\mu-\lambda} \right)\\
&=&\lambda\left( R-\frac{c}{\mu-\lambda}\right)  - h(\lambda,\delta),
\end{eqnarray*} 
where
$$
h(\lambda, \delta) = 
\lambda \left(\delta - c\frac{1-e^{-(\mu-\lambda)d^{e}(\lambda, R-\delta, c)}}{\mu-\lambda} \right).
$$
Therefore, to prove  \eqref{eq:Gconv}, it suffices to show that
$\lim_{\delta \to 0}h(\lambda, \delta)=0$,
or equivalently that
$\lim_{\delta \to 0} d^{e}(\lambda, R-\delta, c)=0$.
To show the last property, note that since $d^{e}(\lambda, R-\delta, c)$ is increasing in $\delta$, this limit exists, and is generally equal to some $d'\geq 0$. Assume that $d'>0$. Then, $d^{e}(\lambda, R-\delta, c) \geq d'$ for all $\delta >0$.
On the other hand, by definition of $d^e$ it is true that 
$\lambda^e(d^{e}(\lambda, R-\delta, c), R-\delta, c) = \lambda$ for all $\delta <\tilde{\delta}$. Since $\lambda^e$ is decreasing in $d$, it follows that $\lambda^e(d', R-\delta, c) \geq \lambda$ for all $\delta <\tilde{\delta}$, therefore,
$\lim_{\delta \to 0} \lambda^{e}(d', R-\delta, c) \geq \lambda$, which is a contradiction, because from Theorem \ref{theopld} it follows that
$\lim_{\delta \to 0} \lambda^{e}(d', R-\delta, c) =0$.

ii) From Proposition \ref{GRNGU} it follows that $G_1(\lambda, d^{e}(\lambda, p, l), p, l)<\lambda \left ( R-\frac{c}{\mu-\lambda} \right )$ for any $p<R, l<c$. Therefore, the supremum cannot be attained for any $p, l$ such that $\lambda$ is $(p, l)$-achievable. 
\end{proof} 

The quantity $\lambda\left( R-\frac{c}{\mu-\lambda}\right)$ is equal to the social benefit per unit time under risk neutral customers. It is well known \citep{edelson1975congestion} that under risk neutrality the provider's profit maximization problem coincides with that of social benefit maximization, i.e., the service provider sets a price that induces the socially optimal input rate and, by doing that, reaps the entire social benefit. A significant consequence of Propositions  \ref{GRNGU} and \ref{Hsup} is that even when customers are risk averse the provider can induce any input rate he prefers with a profit arbitrarily close to the social benefit under that rate. Therefore, by exploiting the lead-time/delay compensation option, he or she can regain essentially all the lost profit and thus revert the effects of customer risk aversion. 

Propositions \ref{GRNGU} and \ref{Hsup} solved the second stage optimization problem in \eqref{Hdef}. Returning to the first stage problem in \eqref{Gstar}, we observe that $H(\lambda)$ is maximized for
$\lambda= \lambda^{*}=\min(\mu-\sqrt{\frac{c \mu}{R}},\Lambda)$, as is also well known from the standard unobservable problem. Thus, the supremum in \eqref{Gstar} is attained by $\lambda^*$, and $G^{*}=H(\lambda^{*})$.

Although $\lambda^{*}$ is the optimal input rate that the provider must induce to  maximize $H(\lambda)$, as we have seen it is not possible to attain the profit $H(\lambda^*)$ by any $(d,p,l)$ policy. However it is possible to construct $\epsilon$-optimal policies, which approach $H(\lambda^*)$ arbitrarily close, and from the proof of Proposition \ref{Hsup} it follows that one way to do that is to offer full delay compensation and set an entrance fee lowe but sufficiently close to the service reward $R$. For any $p$ set, the required lead-time that will induce $\lambda^*$ is equal to $d^e(\lambda^*, p, c)$.  

All the above is summarized in Theorem \eqref{eoptimal} below.

\begin{theorem}\label{eoptimal}
\begin{itemize}
\item[i]  $G^{*}=\sup\limits_{d, p, l} G(d, p, l)=\max\limits_{\lambda\in(0,\mu)}H(\lambda)=H(\lambda^{*})$, where $\lambda^{*}=\min(\mu-\sqrt{\frac{c \mu}{R}}, \Lambda)$.
\item[ii] There is no optimal solution for the provider, i.e., the $\sup$ is not attained under any policy $(d, p, l)$. However there exist $\epsilon$-optimal policies, i.e., for any $\epsilon>0$ there exists $\delta>0$ such that: $$G\left( d^e(\lambda^{*}, R-\delta, c), R-\delta, c) \right)>G^{*}-\epsilon.$$
\end{itemize}
\end{theorem}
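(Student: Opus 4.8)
The plan is to assemble the statement from Propositions \ref{GRNGU} and \ref{Hsup} together with one elementary maximization; most of the substance is already in place, so the theorem is largely a consolidation of earlier results. For part (i) I would begin from \eqref{Gstar}, which gives $G^{*}=\sup_{d,p,l}G(d,p,l)=\sup_{\lambda}H(\lambda)$, the inner supremum running over achievable rates $\lambda\in(0,\mu)\cap(0,\Lambda]$ (the rate $\lambda=0$ gives profit $0$ and is dominated once one assumes, as usual, the nontriviality condition $R\mu>c$, i.e.\ $\mu-\sqrt{c\mu/R}>0$, so that a customer alone in the system has positive net value). By Proposition \ref{Hsup}(i), $H(\lambda)=\lambda\bigl(R-\tfrac{c}{\mu-\lambda}\bigr)$ on this interval, so it only remains to maximize the explicit function $\varphi(\lambda)=\lambda R-\tfrac{c\lambda}{\mu-\lambda}$. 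Since $\varphi'(\lambda)=R-\tfrac{c\mu}{(\mu-\lambda)^{2}}$ is strictly decreasing on $(0,\mu)$, $\varphi$ is strictly concave there with a unique interior stationary point $\bar\lambda=\mu-\sqrt{c\mu/R}$; hence on $(0,\Lambda]$ the (unique) maximizer is $\bar\lambda$ when $\Lambda\ge\bar\lambda$ and $\Lambda$ when $\Lambda<\bar\lambda$, i.e.\ $\lambda^{*}=\min(\bar\lambda,\Lambda)$, giving $G^{*}=H(\lambda^{*})$.

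For the non-attainment in part (ii) I would argue by contradiction. Suppose a policy $(d,p,l)$ satisfies $G(d,p,l)=G^{*}$. Writing $\lambda^{0}=\lambda^{e}(d,p,l)$ and using that $d^{e}(\lambda^{0},p,l)$ maximizes $G_{1}$ among all lead-times that achieve $\lambda^{0}$ under $(p,l)$, one obtains the chain $G(d,p,l)\le G_{1}\bigl(\lambda^{0},d^{e}(\lambda^{0},p,l),p,l\bigr)\le H(\lambda^{0})\le H(\lambda^{*})=G^{*}$, which forces equality throughout. Then $H(\lambda^{0})=H(\lambda^{*})$, and since $\varphi$ is strictly unimodal with unique maximizer $\lambda^{*}$ this yields $\lambda^{0}=\lambda^{*}$; but then $G_{1}\bigl(\lambda^{*},d^{e}(\lambda^{*},p,l),p,l\bigr)=H(\lambda^{*})$, contradicting Proposition \ref{Hsup}(ii). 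Hence the supremum defining $G^{*}$ is not attained by any policy.

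Finally, the $\epsilon$-optimal construction is exactly the one used in the proof of Proposition \ref{Hsup}(i), specialized to $\lambda=\lambda^{*}$: for $\delta>0$ below the threshold $\tilde\delta$ found there, $\lambda^{*}$ is $(R-\delta,c)$-achievable, the required lead-time $d^{e}(\lambda^{*},R-\delta,c)$ is well defined, and since this policy induces $\lambda^{*}$ as unique equilibrium, $G\bigl(d^{e}(\lambda^{*},R-\delta,c),R-\delta,c\bigr)=G_{1}\bigl(\lambda^{*},d^{e}(\lambda^{*},R-\delta,c),R-\delta,c\bigr)=H(\lambda^{*})-h(\lambda^{*},\delta)$, with $h$ the quantity from the proof of Proposition \ref{Hsup} and $h(\lambda^{*},\delta)\to 0$ as $\delta\to 0$ because $d^{e}(\lambda^{*},R-\delta,c)\to 0$. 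Given $\epsilon>0$, any $\delta$ small enough that $h(\lambda^{*},\delta)<\epsilon$ then gives $G\bigl(d^{e}(\lambda^{*},R-\delta,c),R-\delta,c\bigr)>G^{*}-\epsilon$. The only genuinely new work beyond the preceding propositions is the one-line calculus of part (i) and the bookkeeping that pins the optimizer to $\lambda^{*}$; the mildly delicate point is handling the boundary case $\lambda^{*}=\Lambda$ and invoking strict unimodality of $H$ so that attainment at a non-optimal rate is also ruled out.
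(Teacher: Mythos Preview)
Your proposal is correct and follows essentially the same route as the paper: the theorem is presented in the paper as a summary of Propositions~\ref{GRNGU} and~\ref{Hsup} together with the classical maximization of $\lambda(R-c/(\mu-\lambda))$, without a separate formal proof, and your write-up simply fills in the details of that sketch. Your non-attainment argument is in fact slightly more careful than the paper's discussion, since you explicitly use strict concavity of $H$ to force $\lambda^{0}=\lambda^{*}$ before invoking Proposition~\ref{Hsup}(ii), whereas the paper tacitly assumes any optimal policy would induce $\lambda^{*}$.
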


\section{Numerical Results}\label{numeric}
In this section we perform computational experiments to obtain further insights on the behavior of the pricing curves and the provider's optimal profit under one fixed policy parameter, as well as on the effect of customer risk aversion. For the computations we use  
$$
U(X)=\frac{1-e^{-rX}}{r},
$$
with $r>0$, which is a utility function in the class of Constant Absolute Risk Aversion  (CARA), with absolute risk aversion equal to $r$. Note that $U$ is decreasing in $r$ for all $x$,  and 
as $r$ diminishes to zero the utility function converges to the risk neutral form $U(X)= X$. 

For this utility function the range of achievable input rates without any compensation is $0\leq \lambda \leq \lambda_{CF,max}= \min(\lambda_{CF}^e(0;c),\Lambda)$, and $\lambda_{CF}^e(0;c) = \mu-\frac{rc}{1-e^{-rR}}$. In general when a compensation policy is used, the corresponding range increases, and
one of the goals in the numerical analysis is to compare achievable ranges with and without compensation. In the following we refer to input rates that are below $\lambda_{CF,max}$ and thus can be achieved without compensation as low, and to rates above $\lambda_{CF,max}$, which can be achieved only when appropriate compensation is offered, as high. 

In the following computational experiments we use a base case of parameter values  $R=15,c=8,\mu=12, r=0.5, p=10, l=4.5, p=10, d=0.5$ and in each case let one or more of the parameters to vary. Since the market size $\Lambda$ only affects the results by restricting the achievable ranges, we assume $\Lambda > \mu$.
Under these values the maximum achievable rate without compensation is equal to 
$\lambda_{CF,max}=7.99$ and the optimal rate under risk neutrality is equal to 
$\lambda^*=9.47$.

In the first set of computational experiments we consider the load-control problem when one policy parameter is kept fixed at a time. For each case we derive the corresponding pricing curves and allowed intervals for the remaining two free parameters under various values of $\lambda$.

\begin{figure}
\includegraphics[width=8.7cm,height=7.0cm]{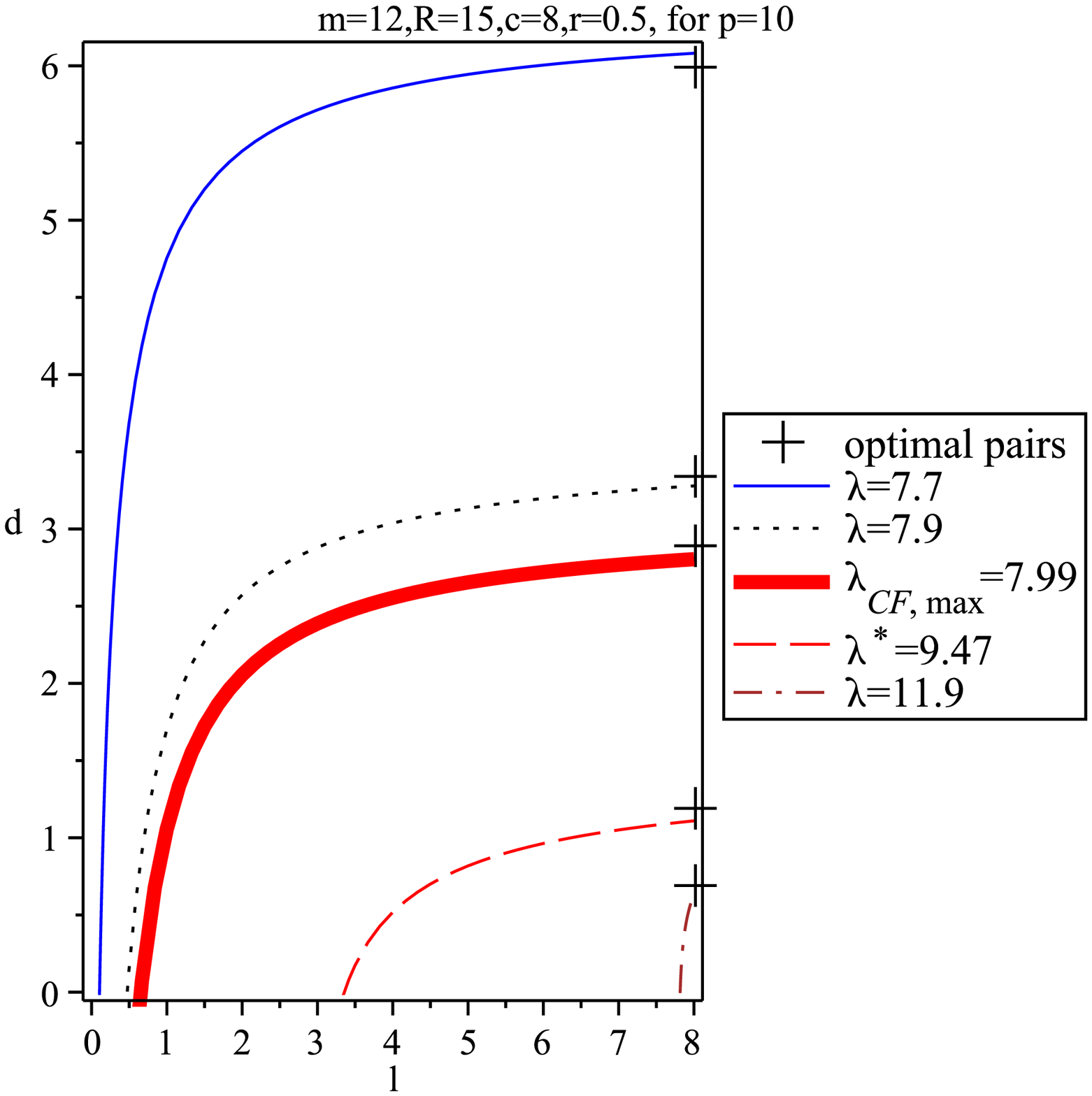}
\includegraphics[width=7.5cm,height=7.0cm]{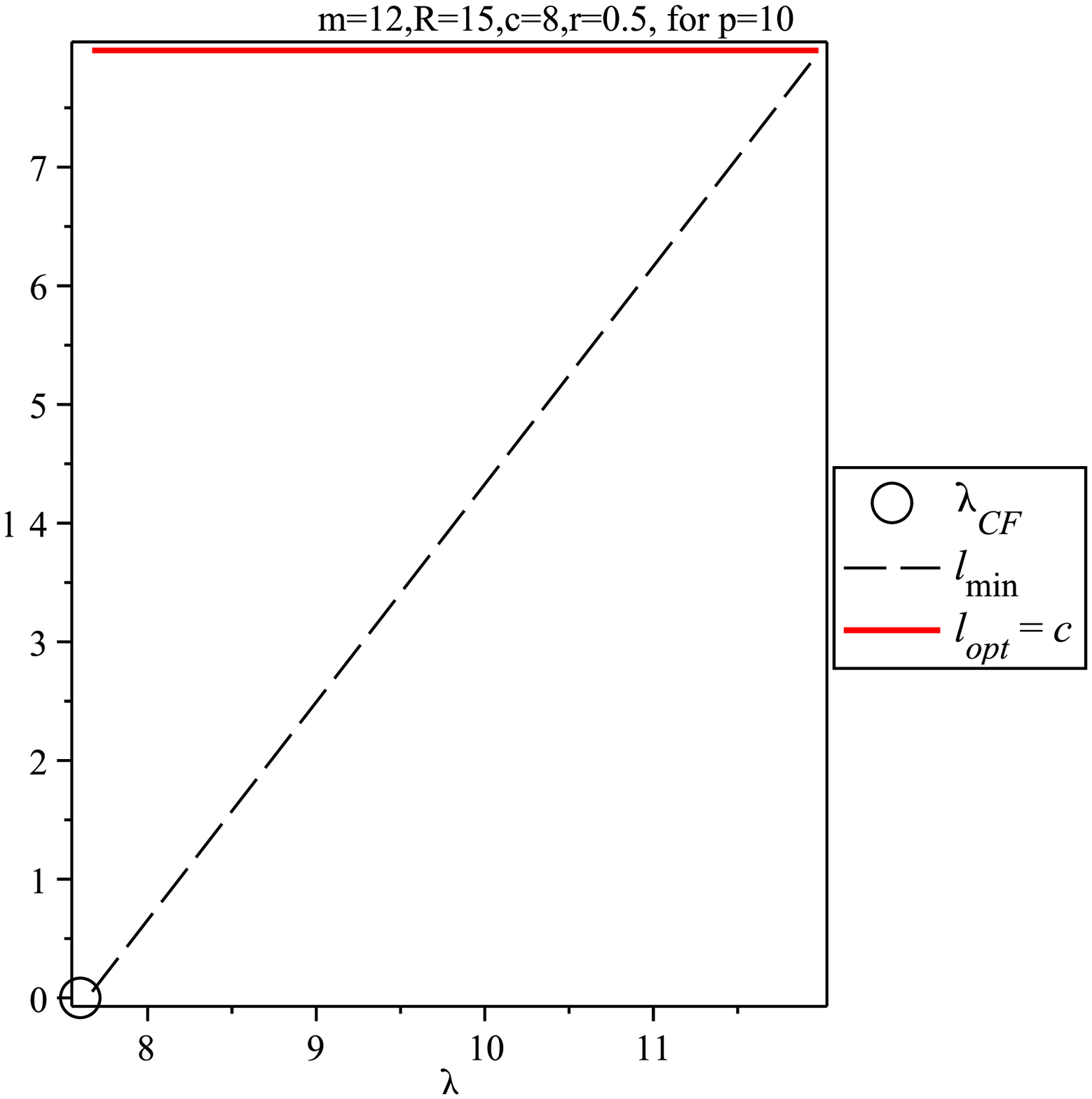}
\caption{$p$ fixed}
\label{pfixed}
\end{figure}

Figure \ref{pfixed} refers to the case when the entrance fee is fixed at $p=10$. The left graph shows the $(d,l)$-pricing curves under various achievable input rates. On each curve the profit maximizing pair is indicated by the cross symbol. 
The right graph shows the range of compensation rates $l$ within which an input rate $\lambda$ is achievable. The profit maximizing value is at the highest value of the range. 

We first observe that under no compensation the only achievable input rate is $\lambda_{CF}^e(p;c) = 7.64$, whereas when a compensation policy is used the achievable range is expanded to the interval $[7.64, 12)$. Values lower than $\lambda_{CF}^e(p;c)$ are not achievable, since the compensation policy can only increase the input rate. 
 
Moreover, the pricing curves are increasing as expected, since when the lead-time is increased the provider must also increase the compensation in order to keep the input rate constant. Similarly, when $l$ is fixed, the required lead-time is decreasing as the desired input rate increases.
 
Finally, on each pricing curve the profit maximizing pair corresponds to the highest value of the compensation rate, i.e., $l=c$. 
Recall from Theorem \ref{eoptimal} that when the provider is completely flexible in setting the policy parameters, he or she prefers to set the entrance fee very close to $R$ and fully compensate all customer delay for any desired value of $\lambda$. Here we see that when the entrance fee is kept fixed at a value below $R$, then the provider prefers a policy that fully compensates all excess delay above an appropriate positive lead-time, rather than partially compensating the delay from the first minute.

\begin{figure}
\includegraphics[width=8.7cm,height=7.0cm]{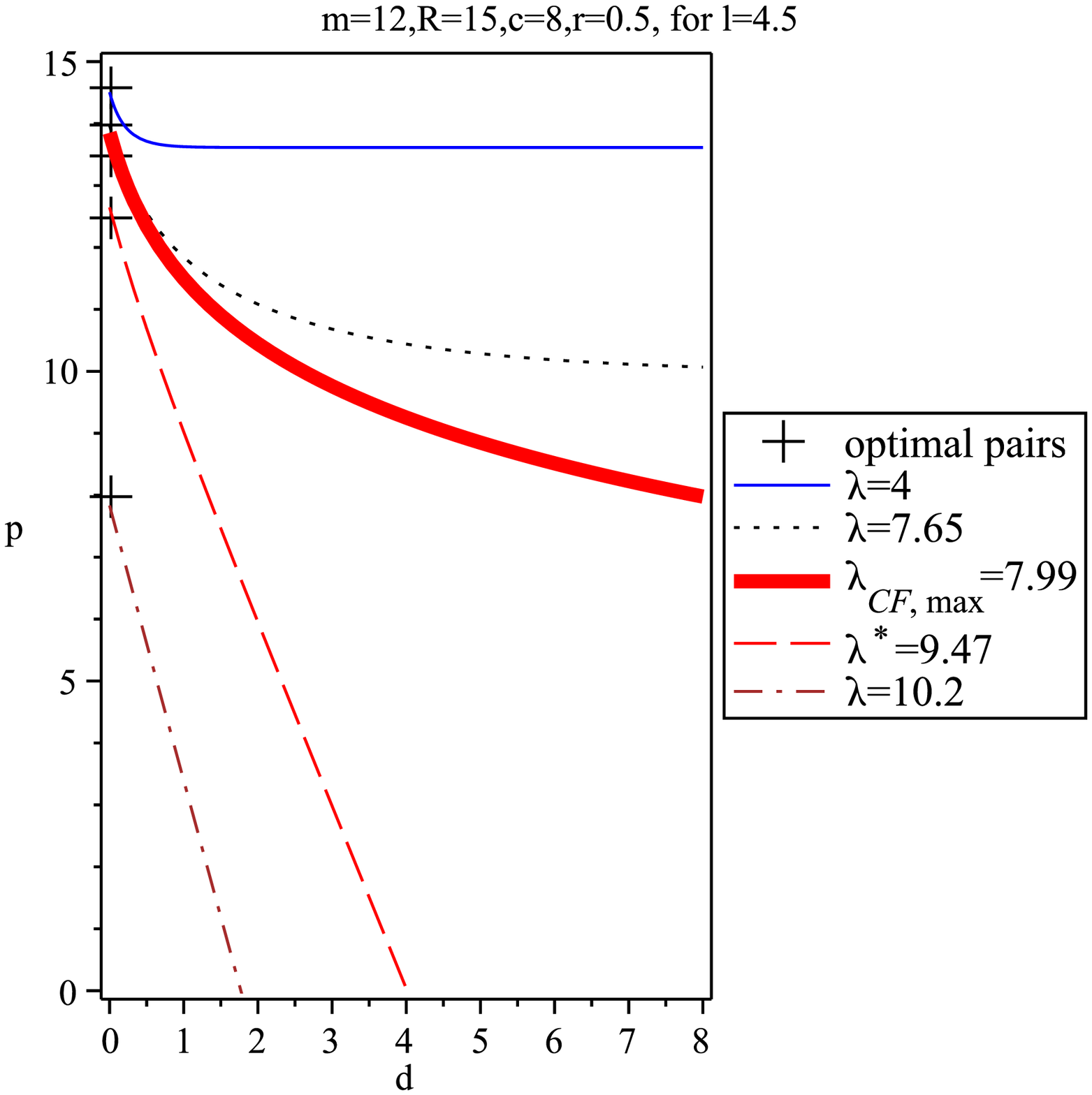}
\includegraphics[width=7.5cm,height=7.0cm]{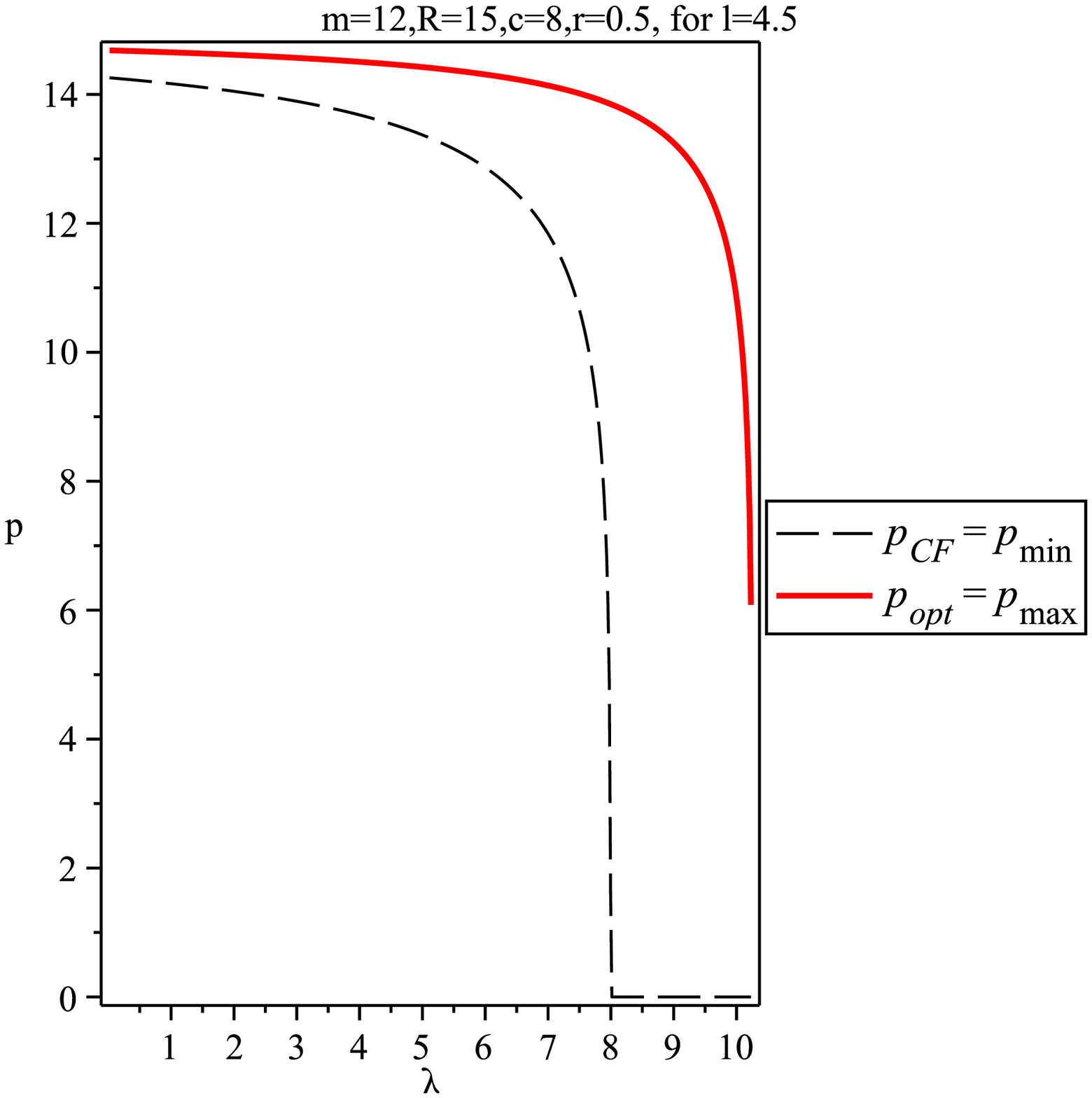}
\caption{$l$ fixed}
\label{lfixed}
\end{figure}

Figure \ref{lfixed} refers to the case when the compensation rate is fixed at $l=4.5$. The left graph shows the $(p,d)$-pricing curves for various values of $\lambda$. The right graph shows the range of entrance fees within which each input rate is achievable. For $\lambda< \lambda_{CF,max}$, the lowest value of the range corresponds to the price $p_{CF}(\lambda;c)$  that results in input rate $\lambda$ under no compensation. The highest value corresponds to the profit maximizing price. 
For the given value of $l$ the achievable range of input rates from Theorem \ref{onefixedachievable} is the interval $[0, 10.24]$, compared to the interval $[0, 7.99]$ under no compensation.

In this case the pricing curves are decreasing for each value of $\lambda$, i.e., the provider must set a lower entrance fee as the lead-time for compensation is increased. Furthermore, for low values of the input rate $\lambda< \lambda_{CF,max}$ the provider may choose effectively not to offer compensation by setting $d$ to an arbitrarily high value and still set a positive value of $p$. On the other hand, if the desired input rate is high, not offering compensation is not an option, since the entrance fee becomes zero for a finite value of $d$. 

On each pricing curve the profit maximizing pair corresponds to $d=0$ and  the highest possible value of the entrance fee. This implies that for any desired input rate, the provider prefers to compensate at the given rate $l$ from the first minute of delay and be able to set a high entrance fee rather than lower the fee and increase the lead-time. 

Figure \ref{dfixed} refers to the case when the lead-time is fixed at $d=0.5$. The left graph shows the $(p,l)$-pricing curves under various achievable input rates. The right graph shows the range of entrance fees within which each input rate is achievable. As in the previous case, for $\lambda< \lambda_{CF,max}$, the lowest value of the range corresponds to the price $p_{CF}(\lambda;c)$ that results in input rate $\lambda$ under no compensation. Again, the highest value corresponds to the profit maximizing price. For the given value of $d$ the achievable range of input rates from Theorem \ref{onefixedachievable} is the interval $[0, 12)$, which is extended compared to the interval $[0, 7.99]$ under no compensation.

Additionally, the pricing curves are increasing for each value of $\lambda$, i.e., the provider can set a higher entrance fee as the compensation rate is increased. Also, for low values of the input rate $\lambda< \lambda_{CF,max}$ the provider has the option to offer no compensation by setting a minimum positive value of $p$. However, if the desired input rate is high, he or she always has to offer a positive compensation even if he or she offers a zero entrance fee.

Finally, on each pricing curve the profit maximizing pair corresponds to $l=c$ and the highest possible value of the entrance fee. This implies that for any desired input rate, the provider prefers to fully compensate all excess delay above a given lead-time and be able to set a high entrance fee rather than lower the fee and decrease the compensation rate.
Furthermore, the optimal entrance fee $p$ is decreasing in $\lambda$, in contrast to the case where all parameters are flexible and the optimal entrance fee is essentially equal to the service reward $R$. 

\begin{figure}
\includegraphics[width=8.7cm,height=7.0cm]{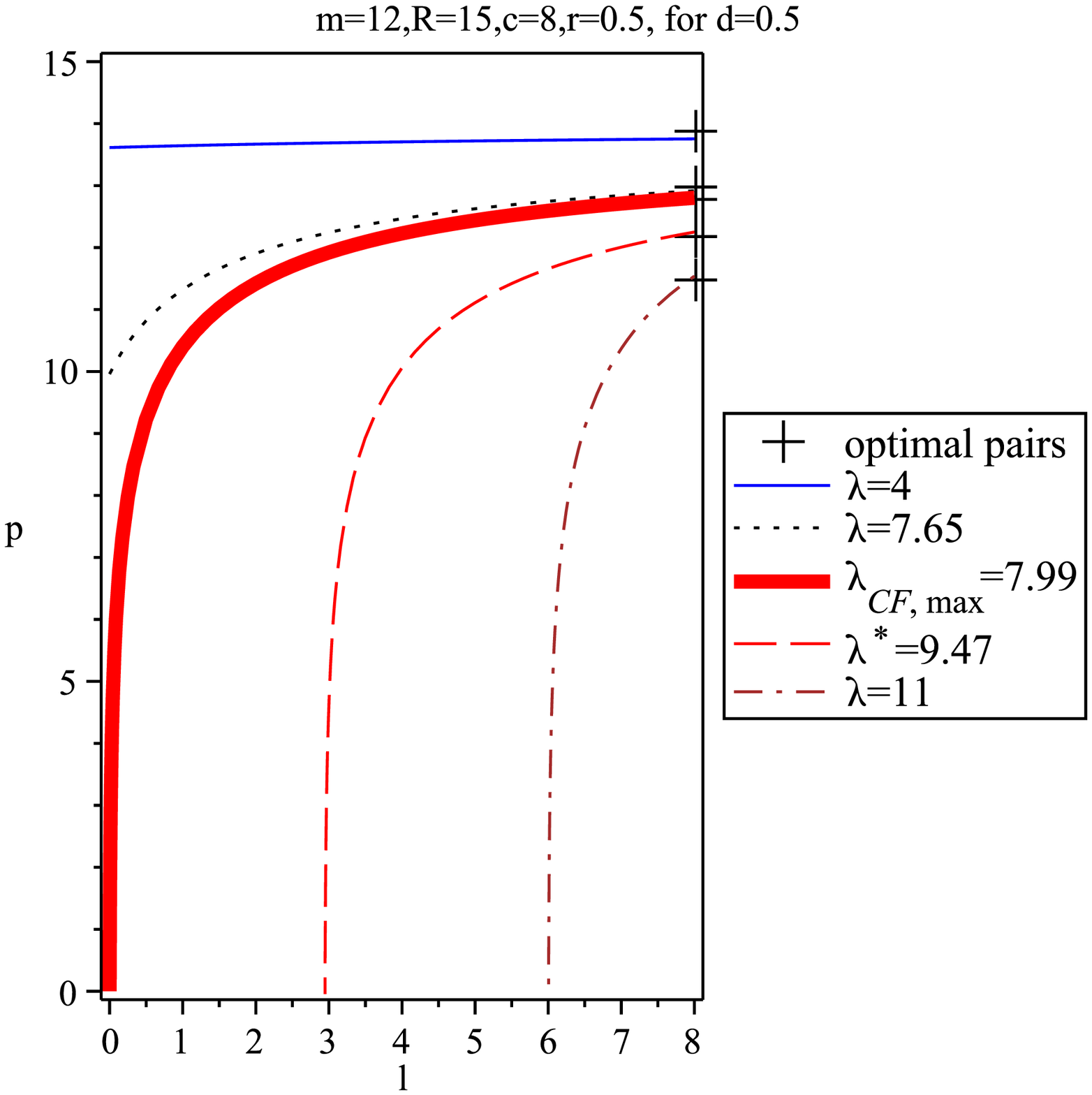}
\includegraphics[width=7.5cm,height=7.0cm]{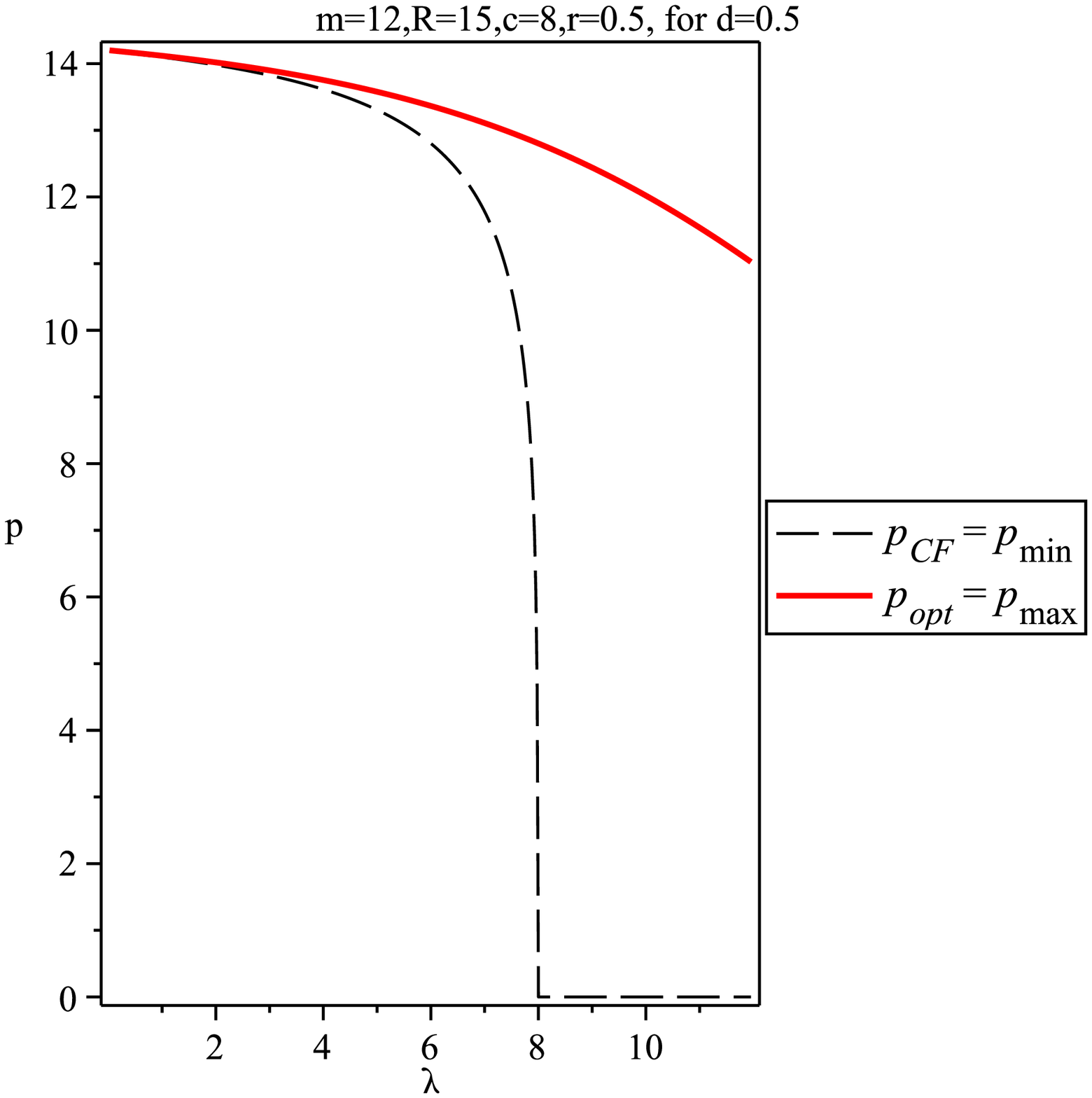}
\caption{$d$ fixed}
\label{dfixed}
\end{figure}

A question that has not been addressed so far is related to the actual profit that the provider may achieve and in particular to what extent the compensation policy may help recover the profit reduction due to customer risk aversion. Figure \ref{optimalprofits} presents the optimal profit as a function  of the input rate under no compensation, under full policy flexibility, and under each of the above three cases with one parameter fixed. Several interesting insights can be derived from these graphs. The lowest curve (CF) corresponds to the optimal profit under no delay compensation, and the highest curve (H) to the optimal profit $H(\lambda)$ that that can be achieved under customer risk neutrality and, as we saw in the previous section, can be approached arbitrarily close when the provider has full flexibility on policy parameters. The peak values of these two curves differ by approximately 30\%, which shows that under this value of the risk parameter $r$ the effect of risk aversion on the profitability is significant. 

The three intermediate profit curves show that by using a compensation policy the provider can recover a substantial part of this profit loss in general. For the given set of parameter values, we observe that keeping the lead-time $d$ fixed and having flexibility on $p,l$ is the most beneficial case, both in terms of the range of $\lambda$ that can be achieved and in terms of the profit function. Note that the value of $d=0.5$ that has been used is relatively very high, since it corresponds to six times the mean service time. This shows that even by offering a low-impact compensation policy, under which customers are rarely compensated, the provider can significantly improve his/her profit and approach the risk-neutral curve closely. 

This discussion confirms the observations reached in the three separate cases of fixed parameters above, namely, that the most instrumental feature of the compensation policy is that it allows the provider to offer the risk-averse customers a hedge against high delays and at the same time increase the entrance fee close to the levels that are optimal under risk-neutrality. This was shown analytically in Section \ref{profitmax} under full policy flexibility, and it still holds when the policy is constrained in some way. Although the discussion here refers to a fixed value of $r$, the insights seem to be robust under several other parameter combinations. 

Returning to Section \ref{profitmax}, it was shown that under full policy flexibility, the optimal profit function can be approached by providing full compensation from the first minute and setting the entrance fee arbitrarily close to the service reward $R$. From the discussion above it follows that this is not the only possibility. In fact, an alternative approach for the provider may be to offer full compensation with a relatively high lead-time, and set the entrance fee at the more modest level of the maximizing price under no compensation.  

\begin{figure}
\includegraphics[width=1.1\textwidth]{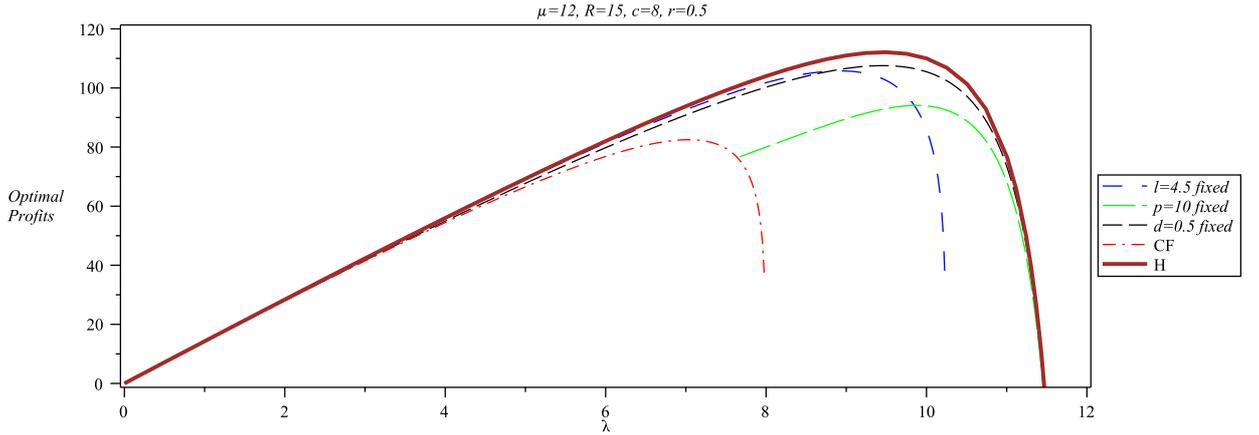}
\caption{Optimal profit function}
\label{optimalprofits}
\end{figure}

In the second set of computational experiments we focus on the effect of the degree of risk aversion on the range of achievable input rates and the corresponding policy curves. For clarity of the exposition we consider the cases of two fixed policy parameters. For each case, Figure \ref{riskcurves} shows how the third policy parameter varies as  a function of $r$ for various values of $\lambda$. 

A first observation from these graphs is that when the free policy parameter is $d$ or $p$, it is decreasing in $r$ for any value of $\lambda$, while when $l$ is the free parameter it is increasing in $r$. These are all intuitive, since increasing $r$ corresponds to higher risk aversion and thus the provider must move to a more customer beneficial direction in order to keep the same input rate. 
Furthermore, in the first two cases, as $r$ increases, higher values of the input rate gradually stop being achievable. On the other hand, when $l$ is flexible, high values of $\lambda$ are always achievable by setting the compensation rate closer to $c$. The resulting policy guideline is that if high input rates are desirable under high risk aversion and the policy parameters are constrained, then the relaxation must be towards a higher compensation rate for excess delay rather than a lower entrance fee or a lower lead-time.

\begin{figure}
\includegraphics[width=5.4cm,height=5.4cm]{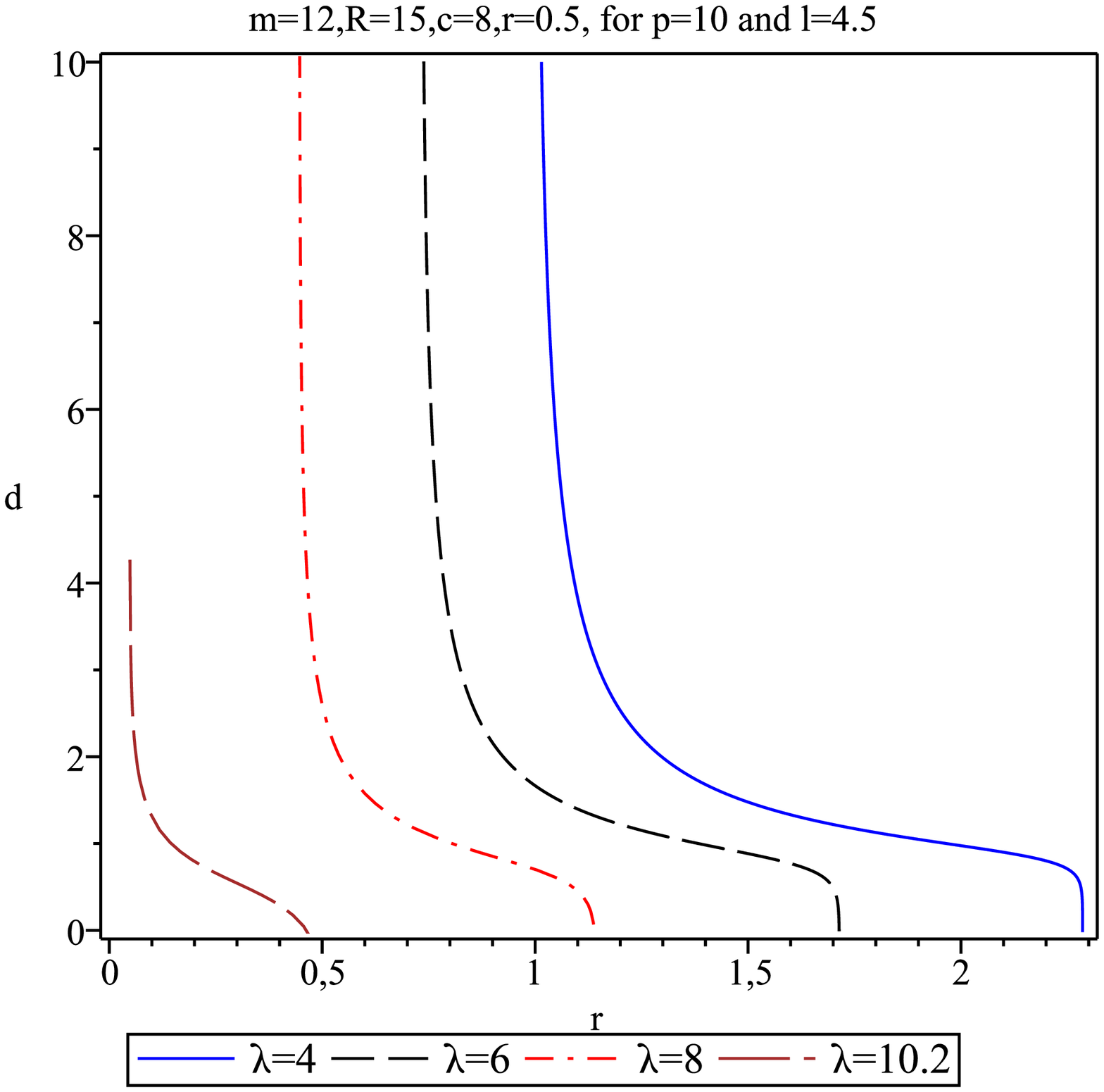}
\includegraphics[width=5.4cm,height=5.4cm]{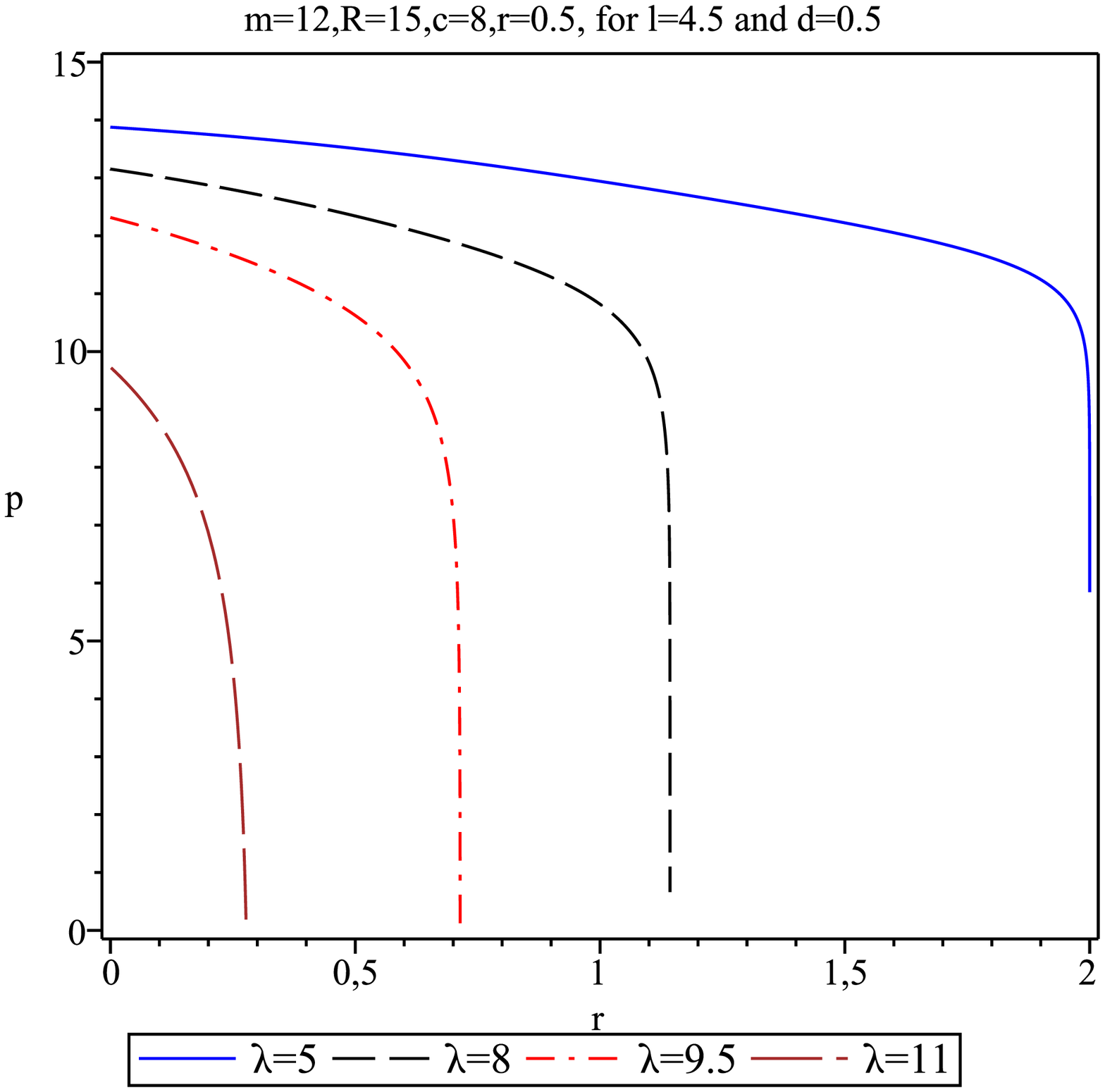}
\includegraphics[width=5.4cm,height=5.4cm]{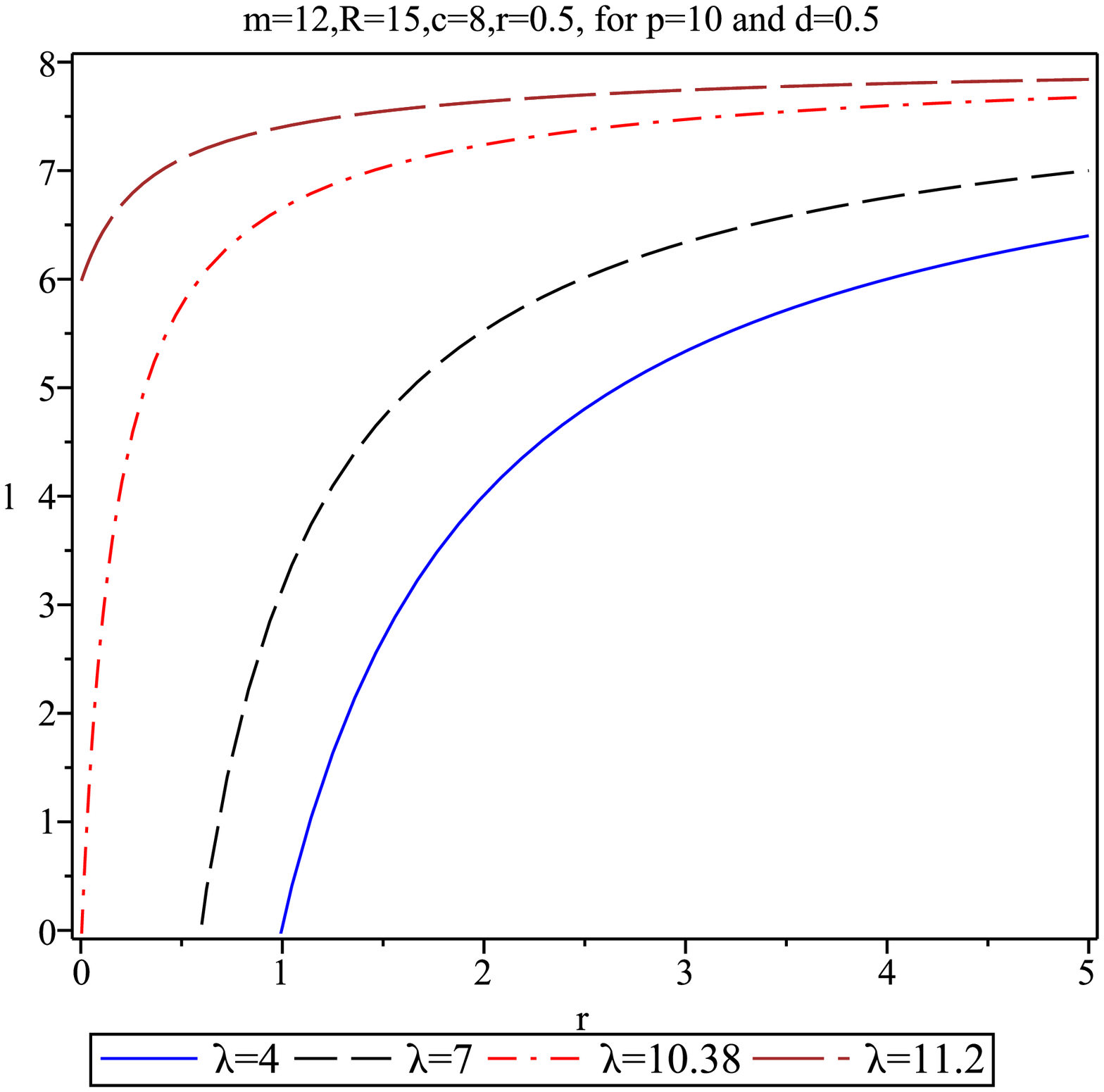}
\caption{Risk aversion variation}
\label{riskcurves}
\end{figure}

\section{Conclusions and Extensions}\label{conclusions}
In this paper we develop customer equilibrium strategies and profit maximizing policies for an unobservable
make-to-order/service production system with risk averse customers, where the service provider, in addition to the entrance fee, sets a common lead-time and linear compensation for the excess delay above the lead-time quote. 
The risk aversion is modeled by a concave utility function of the customer net benefit.
We analyze the effect of customer risk aversion and the compensation policy on the equilibrium strategies and  explore the provider's flexibility regarding the achievable input rates  under various constraints on the pricing/compensation policy.
We prove that when the provider has full flexibility in the pricing/compensation policy, he or she can mitigate the adverse effects of risk aversion almost entirely, by employing a policy that compensates fully for the excess delay and sets the entrance fee close to the customer service benefit. However in this case there is no strictly optimal policy.
In numerical experiments we construct  pricing curves which show the provider's flexibility in  inducing specific input rates, as well as the sensitivity of the provider's policies in the degree of risk aversion. 

This work could be extended in several ways. A natural extension is to consider the observable system, where customers are informed about the system congestion before making their decision and the provider may employ state-dependent quotation policies. This work is currently under progress. In other directions, one may consider the case of a make-to-order system where a physical component is required to complete the service of a customer, and examine the interaction between pricing/compensation and stocking policies.
Finally, it would be interesting to develop a competitive model with more than one providers offering the same service and explore how the equilibrium pricing/compensation policies differ from those formed under a single entrance free case,  as well as to what extent the compensation mechanism now  helps alleviate the risk aversion effects. 

\addcontentsline{toc}{section}{References}

\end{document}